\numberwithin{equation}{section}
\newtheorem{thm}{Theorem}[section]
\newtheorem{lem}{Lemma}[section]
\theoremstyle{definition}
\theoremstyle{remark}
\newtheorem{rem}{Remark}[section]
\begin{document}

\title{Global Small Solutions to a Complex Fluid Model in 3D}

\author{Fanghua Lin$^1$, Ting Zhang$^2$\\
1. Courant Institute, New York University, New York, NY 10012, USA\\
2. Department of Mathematics, Zhejiang University,
Hangzhou 310027, China}
\date{}
\maketitle

\begin{abstract}
     In this paper, we provide a much simplified proof of the main result
in \cite{Lin12} concerning the global existence and uniqueness of smooth
solutions to the Cauchy problem for a 3D incompressible complex fluid
model under the assumption that the initial data are close to some
equilibrium states. Beside the classical energy method, the interpolating
inequalities and the algebraic structure of the equations coming from the
incompressibility of the fluid are crucial in our arguments. We
combine the energy estimates with the $L^\infty$ estimates for time
slices to deduce the key $L^1$ in time estimates. The latter is
responsible for the global in time existence.
\end{abstract}

\section{Introduction.}
In this paper, we consider the global existence of classical solutions to the
following simple model for a complex fluid flows
\begin{equation}
  \left\{
  \begin{array}{l}
    \partial_t \phi +v\cdot\nabla \phi=0,\ \ (t,x)\in\mathbb{R}^+\times\mathbb{R}^3,\\
        \partial_t v+v\cdot\nabla v-\Delta v+\nabla p=-\mathrm{div}[\nabla \phi\otimes\nabla \phi],\\
            \mathrm{div}v=0,\\
            (\phi,v)|_{t=0}=(\phi_0,v_0).
  \end{array}
  \right.\label{m0-E1.1-1111}
\end{equation}
 Here $\phi$,  $v=(v_1,v_2,v_3)^\top$ and $p$
denote the scalar potential, velocity field and scalar pressure of the fluid respectively.
As in \cite{Lin12}, we shall consider the initial data $(\phi_0,v_0)$ is close to a non-trivial equilibrium, e.g. $(\phi_0,v_0)\simeq(x_3,(0,0,0)^\top)$. Note that instead of $x_3$, any non-constant linear functions would work as
well by our method (cf.\cite{Lin12} for explanations). What would be important is that $\nabla \phi_0$ is close to a constant non-zero vector field in a suitable way. Thus, we may write $(\phi,v)=(x_3+\psi,v)$, and
substitute it into (\ref{m0-E1.1-1111}) with $x\in \mathbb{R}^3$, to obtain the following equivalent system for $(\psi,v)$,
 \begin{equation}
  \left\{
  \begin{array}{l}
    \partial_t \psi +v\cdot\nabla \psi+  v_3=0,\ \ (t,x)\in\mathbb{R}^+\times\mathbb{R}^3,\\
        \partial_t v_h+v\cdot\nabla v_h -\Delta v_h+\nabla_h p+ \nabla_h\partial_3\psi=
        -\mathrm{div}[\nabla_h \psi\otimes\nabla \psi],\\
        \partial_t v_3+v\cdot\nabla v_3 -\Delta v_3+\partial_3 p+ (\Delta+\partial_3^2)\psi=
        -\mathrm{div}[\partial_3 \psi \nabla \psi],\\
            \mathrm{div}v=0,\\
            (\psi,v)|_{t=0}=(\psi_0,v_0),
  \end{array}
  \right.\label{m0-E1.5-N3}
\end{equation}
where $v_h=(v_1,v_2)^\top$, $\nabla_h=(\partial_1,\partial_2)^\top$. For the rest of the paper we shall work on the equations (\ref{m0-E1.5-N3}).

We remark that the nonlinear hyperbolic-parabolic system (\ref{m0-E1.1-1111}) has been used for describing many fluid dynamic models, see \cite{Constantin,Constantin2,Lin12-3}.
Indeed, when $\phi=(\phi_1,\phi_2)^\top$ is a vector-valued function on $\mathbb{R}^+\times\mathbb{R}^2$ with $\det(\nabla\phi)=1$, the system (\ref{m0-E1.1-1111}) is equivalent to the well-known Oldroyd-B model for viscoelastic
fluids equations, see \cite{Saut,Larson,Lin12-2,Renardy}. It is also closely related to the
evolution equation of nematic liquid crystal as well as the diffusive sharp
interface motion and immersed boundary in flow fields \cite{Peskin}, see
also the recent survey article \cite{Lin12-3}. In \cite{Lin12}, authors
used the system (\ref{m0-E1.1-1111}) as a toy model for the 3D incompressible
viscous and non-resistive MHD system.

In fact, the system (\ref{m0-E1.1-1111}) is exactly the incompressible MHD equations with zero magnetic diffusion when the space dimension is two. Recall that the 2D incompressible MHD system reads,
    \begin{equation}
      \left\{
      \begin{array}{l}
        \partial_t b+v\cdot\nabla b-\eta\Delta b=b\cdot\nabla v,\ \ (t,x)\in\mathbb{R}^+\times\mathbb{R}^2,\\
        \partial_t v+v\cdot\nabla v-\nu \Delta v+\nabla p=b\cdot\nabla b,\\
            \mathrm{div}v=\mathrm{div}b=0,\\
                b|_{t=0}=b_0,\ v|_{t=0}=v_0,
      \end{array}
      \right.\label{m0-E1.2}
    \end{equation}
where $b=(b_1,b_2)^{\top}$,   $v=(v_1,v_2)^{\top}$ and $p$
denote the magnetic field, velocity field and scalar pressure of the fluid respectively.
 In (\ref{m0-E1.2}), the condition $\mathrm{div}b=0$ implies the existence of a scalar function $\phi$ such that $b=(\partial_2\phi,-\partial_1\phi)^\top$, and the corresponding system becomes  the following 2D incompressible MHD type system,
\begin{equation}
  \left\{
  \begin{array}{l}
    \partial_t \phi +v\cdot\nabla \phi-\eta\Delta\phi=0,\ \ (t,x)\in\mathbb{R}^+\times\mathbb{R}^2,\\
        \partial_t v+v\cdot\nabla v-\Delta v+\nabla p=-\mathrm{div}[\nabla \phi\otimes\nabla \phi],\\
            \mathrm{div}v=0,\\
            (\phi,v)|_{t=0}=(\phi_0,v_0).
  \end{array}
  \right.
\end{equation}
There are some global wellposedness results for the system (\ref{m0-E1.2}),
see \cite{Duvaut,Sermange} for the case when $\eta>0$ and $\nu>0$, \cite{Cao}
for the case when $\eta>0$ and $\nu=0$, as well as the case with mixed
partial dissipation and additional (artificial) magnetic diffusion.
In \cite{Bardos}, authored considered the case when $\eta=\nu=0$
and the initial data $(b_0,v_0)$ close to the equilibrium state $(B_0,0)$.
In \cite{Lin12-2}, the case when $\eta=0$, $\nu>0$ was studied. Under
the assumption that the initial data
$(b_0,v_0)$ is close to the equilibrium state $((1,0)^\top,0)$, the
global wellposedness was proven. We should note that authors observed
in \cite{Bardos} that the fluctuations $v+b-B_0$ and $v-b+B_0$ propagate along
the $B_0$ magnetic field in opposite directions. Thus, a strong enough
magnetic field will reduce the nonlinear interactions and prevent
formations of strong gradients \cite{Bardos,Frisch,Kraichnan}.
Unfortunately, the method applied in \cite{Bardos} is purely hyperbolic
(characteristic method) and hence could not be applied in our case.

In \cite{Lin12}, using the anisotropic Littlewood-Paley analysis,  the first
author and Ping Zhang \cite{Lin12} proved a global wellposedness result of
the  system (\ref{m0-E1.1-1111}). The arguments involved, despite its
general interests, were rather complicated. The aim of this note is to
give a new and simple proof, which involves only the energy estimate
method, interpolating inequalities and couple elementary observations.

\begin{thm}\label{m0-Thm1.2}
Assume that the initial data $(\psi_0,v_0)$ satisfy
 $
     (\nabla \psi_0,v_0)\in
        H^2(\mathbb{R}^3)\times H^2(\mathbb{R}^3),
   $ $\mathrm{div}v_0=0$,
  then there exists
    a positive constant $c_0$ such that if
        \begin{equation}
         B_0=\|\nabla \psi_0\|_{H^2}+
                  \|v_0\|_{H^2}\leq c_0,\label{1.6}
        \end{equation}
    then the system (\ref{m0-E1.5-N3}) has a unique global solution $(\psi,v,\nabla p)\in F^2$ satisfying
                 \begin{eqnarray}
      B_T^2=\|v\|^2_{L^\infty([0,T]; H^2 )}+\|\nabla \psi\|^2_{L^\infty([0,T];H^2)}
      +\|\nabla v\|^2_{L^2([0,T]; H^2 )}+\|\nabla_h\nabla \psi\|^2_{L^2([0,T]; H^1 )}
      \leq CB_{0}^2,\label{1.7}
    \end{eqnarray}
and
    \begin{equation}
      \|\nabla p\|_{L^\infty([0,T]; H^1 )}
       \leq  C B_0,\label{1.8}
    \end{equation}
    for all $T>0$,
where $C$ is a positive constant independent of $T$,
$$
 F^n= \left\{ (\psi,v,\nabla p)\left| \begin{array}{l}
   (\nabla \psi,v,\nabla p)\in C([0,\infty);H^n\times H^n)\times C([0,\infty); H^{n- 1}), \\
     (\nabla_h\nabla \psi, \nabla v)
   \in L^2([0,\infty);H^{n- 1}\times H^n).
   \end{array}
   \right.
   \right\}
    $$
\end{thm}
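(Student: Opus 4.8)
The plan is to run a continuation (bootstrap) argument: local existence and uniqueness in $F^2$ is standard for this hyperbolic–parabolic system, so the whole point is the global a priori estimate $B_T^2 \le C B_0^2$ for all $T$ for which the solution exists, with $C$ independent of $T$. First I would set up the continuity functional
\begin{equation}
  \mathcal{E}(T) = \|v\|_{L^\infty_T H^2}^2 + \|\nabla\psi\|_{L^\infty_T H^2}^2 + \|\nabla v\|_{L^2_T H^2}^2 + \|\nabla_h\nabla\psi\|_{L^2_T H^1}^2,
\end{equation}
and show that on the maximal interval where $\mathcal{E}(T) \le 4C^2 B_0^2$ one in fact has $\mathcal{E}(T) \le 2C^2 B_0^2$, provided $c_0$ is small; a standard connectedness argument then gives global existence together with \eqref{1.7}. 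The pressure estimate \eqref{1.8} is obtained separately at the end: applying $\mathrm{div}$ to the second and third equations of \eqref{m0-E1.5-N3} yields $-\Delta p = \mathrm{div}\,\mathrm{div}(\nabla\psi\otimes\nabla\psi) + \partial_3^3\psi + \mathrm{div}(v\cdot\nabla v)$, and since $\partial_3^3\psi$ contains at least one horizontal... — more carefully, one writes $-\Delta p = \mathrm{div}\,\mathrm{div}(\nabla\psi\otimes\nabla\psi + v\otimes v) - \partial_3(\Delta+\partial_3^2)\psi$; the term $\Delta\partial_3\psi$ is controlled via the equation by $\partial_t(\ldots)$-type quantities, while the remaining terms are handled by Calderón–Zygmund estimates and the bound \eqref{1.7}, giving $\|\nabla p\|_{L^\infty_T H^1}\le CB_0$.

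The core of the argument is the energy estimate. I would apply $\partial^\alpha$ for $|\alpha|\le 2$ to the $v$-equation and the $\nabla\psi$-equation, take $L^2$ inner products, and sum. The key structural cancellation, exactly as in the incompressible MHD/Oldroyd-B setting, is that the linear coupling terms $\nabla_h\partial_3\psi$, $(\Delta+\partial_3^2)\psi$ in the momentum equation and $v_3$ in the $\psi$-equation are skew-adjoint up to controllable commutators: the "magnetic-tension" terms cancel against the transport of $\nabla\psi$ by $v$, leaving only the dissipation $\|\nabla v\|_{H^2}^2$ on the left and, crucially, the partial dissipation $\|\nabla_h\nabla\psi\|_{H^1}^2$ coming from the structure of $\phi$ being transported (the incompressibility $\mathrm{div}\,v=0$ is what turns $v\cdot\nabla\psi$ into a term that, after using the $\psi$-equation again, produces horizontal derivative control on $\psi$ — this is the "algebraic structure" the abstract advertises). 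The nonlinear terms — $v\cdot\nabla v$, $\mathrm{div}(\nabla\psi\otimes\nabla\psi)$, and the commutators $[\partial^\alpha, v\cdot\nabla]$ — are estimated by product and interpolation (Gagliardo–Nirenberg) inequalities so that every term carries at least one factor of a dissipated quantity ($\nabla v$ in full $H^2$, or $\nabla_h\nabla\psi$), yielding a bound of the form $\frac{d}{dt}(\|v\|_{H^2}^2+\|\nabla\psi\|_{H^2}^2) + c(\|\nabla v\|_{H^2}^2 + \|\nabla_h\nabla\psi\|_{H^1}^2) \lesssim (\text{energy})^{1/2}(\text{dissipation})$. Integrating in time and using smallness closes the $L^2_T$-dissipation part of $\mathcal{E}$.

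The genuinely delicate point — and where I expect the main obstacle — is controlling the \emph{vertical} (i.e. $\partial_3$) derivatives of $\nabla\psi$ in $L^\infty_T H^2$, since the transport equation for $\psi$ provides no dissipation at all in the $x_3$-direction; only $\nabla_h\nabla\psi$ is damped. The resolution, which is the heart of the simplified proof, is to exploit the $v_3$ structure: from $\partial_3$ of the $\psi$-equation, $\partial_t\partial_3\psi$ is essentially $-\partial_3 v_3 = \partial_1 v_1 + \partial_2 v_2$ (using $\mathrm{div}\,v=0$) plus lower-order transport terms, so $\partial_3\psi$ inherits the \emph{horizontal} smoothing of $v$; iterating, $\partial_3^2\psi$ and the full $H^2$ norm of $\nabla\psi$ get controlled by combining this identity with the momentum equation and the already-established $L^2_T H^2$ bound on $\nabla v$. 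The decisive technical device — flagged in the abstract — is the $L^1$-in-time estimate: one shows $\int_0^T \|\nabla v(t)\|_{L^\infty}\,dt \lesssim \int_0^T \|\nabla v\|_{H^2}\,dt \le T^{1/2}\|\nabla v\|_{L^2_T H^2}$ is \emph{not} good enough (it grows in $T$), so instead one interpolates the $L^\infty$ time-slice bounds against the $L^2_T$ dissipation to get a genuinely $T$-independent $L^1_T$ bound on the quantities multiplying $\nabla\psi$ in its transport estimate; Gronwall in $\psi$ then closes without any growth in $T$. Assembling the momentum estimate, the $\psi$-transport estimate with this $L^1_T$ input, and the smallness of $B_0$ yields $\mathcal{E}(T)\le 2C^2 B_0^2$, completing the bootstrap.
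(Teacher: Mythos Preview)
Your bootstrap framework and local existence are fine, but two load-bearing steps are misidentified.

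\textbf{Where the horizontal dissipation comes from.} The term $\|\nabla_h\nabla\psi\|_{H^1}^2$ does \emph{not} emerge from the transport structure of the $\psi$-equation. In the paper it is produced by testing the $v_3$-equation (after eliminating the pressure via \eqref{m0-E1.6}, so that the linear part reads $\partial_t v_3 - \Delta v_3 + \Delta_h\psi = \ldots$) against $\Delta\psi$ in $H^1$; the term $(\Delta_h\psi\,|\,\Delta\psi)_{H^1}$ \emph{is} $\|\nabla_h\nabla\psi\|_{H^1}^2$. To make this compatible with the basic $H^2$ energy one must carry the cross term $(v_3\,|\,\Delta\psi)_{H^1}$ and the auxiliary piece $\|\Delta\psi\|_{H^1}^2$ in the energy functional (Lemma~\ref{m0-L2.6}). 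Without this modified energy you have no mechanism at all to put $\nabla_h\nabla\psi$ into $L^2_T$.

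\textbf{The $L^1_T$ obstruction.} Your proposed fix --- ``interpolate $L^\infty$ time-slice bounds against $L^2_T$ dissipation to get a $T$-independent $L^1_T$ bound, then Gronwall'' --- does not work: interpolating $L^\infty_T$ and $L^2_T$ gives at best $L^2_T$, never $L^1_T$, and a Gronwall argument on the $\psi$-transport equation with an $L^2_T$ coefficient still produces $T^{1/2}$ growth. The paper's resolution is different and is the actual content of the ``time-slice'' remark in the abstract. The worst nonlinear term after the energy identity is
\[
\int_0^T\!\!\int_{\mathbb{R}^3}\partial_3 v_3\,(\partial_3^3\psi)^2\,dx\,dt,
\]
and one \emph{does not} try to put $\partial_3 v_3$ into $L^1_T L^\infty$. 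Instead one substitutes $v_3=-(\partial_t\psi+v\cdot\nabla\psi)$ from the $\psi$-equation, so that $\partial_3 v_3=-\partial_3\partial_t\psi-\partial_3(v\cdot\nabla\psi)$, and then integrates the $\partial_t$-piece by parts in time. The space-time integral becomes a boundary term $\int\partial_3\psi(\partial_3^3\psi)^2\,dx\big|_0^T$ (a genuine ``time slice'', bounded by $\|\nabla\psi\|_{L^\infty_T H^2}^3$) plus a higher-order nonlinearity that one handles using the anisotropic interpolation $\|\nabla\psi\|_{L^4_T L^\infty}\lesssim\|\nabla_h\nabla\psi\|_{L^2_T H^1}^{1/2}\|\nabla\psi\|_{L^\infty_T H^2}^{1/2}$ (Lemma~\ref{m0-L2.2}); one even substitutes the $\psi$-equation a second time to close (Lemmas~\ref{m0-L2.2-2}--\ref{m0-L2.5}). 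The upshot is a closed inequality $B_T^2\le CB_0^2+CB_T^3(1+B_T)^2$ with no Gronwall step. Your outline misses this substitution-in-time trick, which is precisely the simplification over the anisotropic Littlewood--Paley approach of \cite{Lin12}.

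Finally, the pressure estimate is much simpler than you suggest: from \eqref{m0-E1.6} one has $p=-2\partial_3\psi+(-\Delta)^{-1}(\text{quadratic})$, so $\|\nabla p\|_{H^1}\lesssim\|\nabla\psi\|_{H^2}+B_T^2$ directly.
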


We note that under the assumptions of Theorem \ref{m0-Thm1.2}, if $(\nabla
\psi_0,v_0)\in H^n(\mathbb{R}^3)\times H^n(\mathbb{R}^3)$, $n\geq3$, then we can easily obtain that $(\psi,v,\nabla p)\in F^n$
    and omit the details.

There are three key technical points in our proofs:
\begin{description}
  \item[(1)] interpolating estimates, see for example, Lemma \ref{m0-L2.2};
  \item[(2)] using the algebraic structure: $\mathrm{div} v =0$ to
inter-changing the estimates for the vertical ($\partial_3$)  and the
horizontal ($\nabla_h$)  derivatives;
  \item[(3)]  using the first equation of (\ref{m0-E1.5-N3}) to reduce
"$L^1$ in time estimates" (\cite{Lin12}) which is the key to the global
existence result to "energy estimates and
$L^\infty$ estimates for time slices" that are relatively easy to obtain.
\end{description}
 In fact, the basic strategy for the proofs is rather clear. Using the
basic energy laws, one reduces the problems to estimating certain
terms of particular forms.
For example,   one of the difficulties of the proofs would be to control
the following type term,
    \begin{equation}
      \int^T_0\int_{\mathbb{R}^3} \partial_3 v_3  (\partial_3^3\psi)^2 dxdt.
    \end{equation}
Since the horizontal derivatives of $\psi$, $\nabla_h\psi$ decay faster
than $\partial_3\psi$ (by energy laws), in \cite{Lin12} authors explored
such anisotropic behavior by using the
anisotropic Littlewood-Paley theory to conclude the key estimate that
$v_3\in L^1(\mathbb{R}^+;Lip(\mathbb{R}^3))$.
Here we will show that first by interpolating inequalities $\nabla
\psi\in L^4_T(L^{ \infty})$ in Lemma \ref{m0-L2.2}.
Then we use the first equation of  (\ref{m0-E1.5-N3})$_1$ twice and
proceed the estimates as follows:
    \begin{eqnarray}
     &&\left| \int^T_0\int_{\mathbb{R}^3} \partial_3 v_3 (\partial_3^3\psi)^2 dxdt\right|\nonumber\\
        &=&\left|\int^T_0\int_{\mathbb{R}^3}
         \partial_3(\partial_t\psi+v\cdot\nabla\psi  )(\partial_3^3\psi)^2 dxdt\right|\nonumber\\
        &\leq&\left|\int_{\mathbb{R}^3}
         \partial_3 \psi(\partial_3^3\psi)^2 dx\big|^T_0\right|+\left|\int^T_0\int_{\mathbb{R}^3}
        \partial_3 v_3\partial_3\psi(\partial_3^3\psi)^2 dxdt\right|+\ldots\nonumber\\
           &=&\ldots+\left|\int^T_0\int_{\mathbb{R}^3}
        \partial_3 (\partial_t\psi+v\cdot\nabla\psi ) \partial_3\psi(\partial_3^3\psi)^2 dxdt\right|+\ldots\nonumber\\
        &\leq&\ldots+ C\|\nabla
\psi\|_{L^4_T(L^\infty(\mathbb{R}^3))}^2\|\nabla v\|_{L^2_T(L^2(\mathbb{R}^3))}\|\partial_3^3 \psi\|_{L^\infty_T(L^2(\mathbb{R}^3))}^2+\ldots.\label{idea}
    \end{eqnarray}
We refere the details to Lemma \ref{m0-L2.5}.
This is a simple idea works well for the issue concerning various
anisotropic dissipative system similar to (\ref{m0-E1.5-N3}).

We also note that in a recent preprint \cite{Lin12-2} authores embedded
the system
(\ref{m0-E1.2}) with $\eta=0$ into a 2D viscoelastic fluid system.
Then they use equations in Lagrangian coordinates and the anisotropic
Littlewood-Paley analysis techniques,  to obtain a global wellposedness
result. One can apply the methods in this paper to obtain similar
results as theirs.

The organization of this paper can be as the following:  we shall present
some \textit{a priori} estimates in Section \ref{m0-S2}, and prove Theorem
\ref{m0-Thm1.2} in Section
\ref{m0-S3}.

Let us complete this section by the notation we shall use in this paper.

\textbf{Notation.}   We shall denote by $(a|b)$ the $L^2$ inner product of $a$ and $b$, and $(a|b)_{H^s}$ the standard $H^s$ inner product of $a$ and $b$.   Finally, we denote $L^p_T(L^q_h(L^r_v))$ the space
$L^p([0,T];L^q(\mathbb{R}_{x_1}\times\mathbb{R}_{x_{2}};L^r(\mathbb{R}_{x_3})))$, $C_T(X)$ the space $C([0,T];X)$.

\section{\textit{A priori} estimates}\label{m0-S2}
 In this section, we prove a set of \textit{a priori} estimates which
are crucial for the global existence of solutions for the system
(\ref{m0-E1.5-N3}). We begin with the following
Gagliardo-Nirenberg-Sobolev type estimate, see \cite{Nirenberg}.

\begin{lem}\label{m0-L2.2}
  If the function $\psi$ satisfies that $\nabla \psi\in L^\infty_T(H^2(\mathbb{R}^3))$ and $\nabla_h\nabla \psi\in L^2_T(H^1(\mathbb{R}^3))$,  then there hold
  \begin{eqnarray}
      \|\nabla \psi\|_{L^4_T(L^4(\mathbb{R}^3))}   &\leq& C      \| \nabla_h\nabla \psi\|_{L^2_T(L^2(\mathbb{R}^3))}^\frac{1}{2}
        \|   \nabla \psi\|_{L^\infty_T(H^1(\mathbb{R}^3))}^\frac{1}{2},\label{3D-E2.3-00}
    \end{eqnarray}
     \begin{equation}
      \|\nabla^2 \psi\|_{L^4_T(L^4(\mathbb{R}^3))}\leq C      \| \nabla_h\nabla \psi\|_{L^2_T(H^1(\mathbb{R}^3))}^\frac{1}{2}
        \|   \nabla \psi\|_{L^\infty_T(H^2(\mathbb{R}^3))}^\frac{1}{2},\label{3D-E2.3-0}
    \end{equation}
     \begin{equation}
      \|\nabla \psi\|_{L^4_T(L^{\infty}(\mathbb{R}^3))}\leq C      \| \nabla_h\nabla \psi\|_{L^2_T(H^1(\mathbb{R}^3))}^\frac{1}{2}
        \|   \nabla \psi\|_{L^\infty_T(H^2(\mathbb{R}^3))}^\frac{1}{2},\label{m0-E2.2}
    \end{equation}
    where   $C$ is a positive constant  independent of $T$.
\end{lem}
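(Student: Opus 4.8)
The plan is to prove all three inequalities by anisotropic Gagliardo--Nirenberg interpolation, exploiting that we control two \emph{horizontal} derivatives of $\nabla\psi$ in $L^2_T(L^2)$ but only full $H^2$ in $L^\infty_T$. The workhorse will be the standard 3D anisotropic $L^4$ estimate: for any $f\in H^1(\mathbb{R}^3)$,
\begin{equation*}
\|f\|_{L^4(\mathbb{R}^3)}\le C\|f\|_{L^2}^{1/4}\|\partial_1 f\|_{L^2}^{1/4}\|\partial_2 f\|_{L^2}^{1/4}\|\partial_3 f\|_{L^2}^{1/4},
\end{equation*}
which follows by writing $|f(x)|^2$ as an integral of $\partial_i(f^2)$ in each of the three directions, applying Cauchy--Schwarz and Minkowski/Hölder in the usual Ladyzhenskaya-type fashion, then taking the geometric mean of the three one-directional bounds.

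For \eqref{3D-E2.3-00} I would apply this inequality with $f=\partial_j\psi$ for each $j$. This gives, pointwise in $t$,
\begin{equation*}
\|\nabla\psi\|_{L^4}^4\le C\sum_j\|\partial_j\psi\|_{L^2}\,\|\partial_1\partial_j\psi\|_{L^2}\,\|\partial_2\partial_j\psi\|_{L^2}\,\|\partial_3\partial_j\psi\|_{L^2}
\le C\|\nabla\psi\|_{L^2}\,\|\nabla_h\nabla\psi\|_{L^2}^2\,\|\nabla\nabla\psi\|_{L^2},
\end{equation*}
where I have bounded the two horizontal factors by $\|\nabla_h\nabla\psi\|_{L^2}$ and the single $\partial_3$ factor by $\|\nabla^2\psi\|_{L^2}\le\|\nabla\psi\|_{H^1}$, and $\|\nabla\psi\|_{L^2}\le\|\nabla\psi\|_{H^1}$. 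Thus $\|\nabla\psi\|_{L^4}^4\le C\|\nabla_h\nabla\psi\|_{L^2}^2\|\nabla\psi\|_{H^1}^2$ pointwise in $t$; integrating in $t$ over $[0,T]$ and pulling $\|\nabla\psi\|_{H^1}^2$ out in $L^\infty_T$ leaves $\int_0^T\|\nabla_h\nabla\psi\|_{L^2}^2\,dt=\|\nabla_h\nabla\psi\|_{L^2_T(L^2)}^2$, and taking fourth roots yields \eqref{3D-E2.3-00}. Inequality \eqref{3D-E2.3-0} is identical with $\partial_j\psi$ replaced by $\partial_j\partial_k\psi$: the two horizontal factors are absorbed by $\|\nabla_h\nabla^2\psi\|_{L^2}\le\|\nabla_h\nabla\psi\|_{H^1}$ and the $\partial_3$ factor by $\|\nabla^3\psi\|_{L^2}\le\|\nabla\psi\|_{H^2}$, with $\|\nabla^2\psi\|_{L^2}\le\|\nabla\psi\|_{H^1}\le\|\nabla\psi\|_{H^2}$, giving the stated bound after the same time-integration.

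For the $L^\infty$ bound \eqref{m0-E2.2} I would use the 3D Agmon/Gagliardo--Nirenberg inequality $\|g\|_{L^\infty(\mathbb{R}^3)}\le C\|g\|_{L^2}^{1/4}\|\nabla^2 g\|_{L^2}^{3/4}$, or more conveniently $\|g\|_{L^\infty}\le C\|\nabla g\|_{L^2}^{1/2}\|\nabla^2 g\|_{L^2}^{1/2}$ after a further interpolation, applied to $g=\partial_j\psi$: this gives $\|\nabla\psi\|_{L^\infty}\le C\|\nabla^2\psi\|_{L^2}^{1/2}\|\nabla^3\psi\|_{L^2}^{1/2}$. To bring in the anisotropic gain one wants $\|\nabla^2\psi\|_{L^2}\lesssim\|\nabla_h\nabla\psi\|_{L^2}+\|\partial_3\nabla\psi\|_{L^2}$ and then to trade $\partial_3$ derivatives for horizontal ones; the cleaner route, and the one I would actually follow, is to apply \eqref{3D-E2.3-00}--\eqref{3D-E2.3-0} type arguments directly to $g=\partial_j\psi$ using a one-dimensional Sobolev embedding in $x_3$ combined with a 2D estimate in the horizontal variables: $\|g\|_{L^\infty_v}^2\le C\|g\|_{L^2_v}\|\partial_3 g\|_{L^2_v}$ pointwise in $x_h$, then $\|\,\|g\|_{L^\infty_v}\|_{L^\infty_h}\le C\|\,\|g\|_{L^\infty_v}\|_{H^2_h}$, expanding the $H^2_h$ norm and distributing the two horizontal derivatives so that at least one horizontal derivative always lands, producing the factor $\|\nabla_h\nabla\psi\|_{H^1}^{1/2}$ in $L^2_T$ and $\|\nabla\psi\|_{H^2}^{1/2}$ in $L^\infty_T$ after integrating the resulting bilinear expression in $t$ with Cauchy--Schwarz.

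The main obstacle is purely bookkeeping: one must be careful, in each application, that every term appearing in the geometric/interpolation mean genuinely contains at least one horizontal derivative of $\nabla\psi$ (so it can be charged to $\|\nabla_h\nabla\psi\|_{L^2_T(H^{0\text{ or }1})}$) and at most the available number of total derivatives (so the remaining factor sits in $\|\nabla\psi\|_{L^\infty_T(H^{1\text{ or }2})}$); the exponents $\tfrac12$, $\tfrac12$ then come out correctly precisely because in each of the anisotropic $L^4$ bounds exactly two of the four derivative factors are horizontal. There is no analytic difficulty beyond the classical anisotropic Ladyzhenskaya and Agmon inequalities, all of which are covered by the Gagliardo--Nirenberg reference \cite{Nirenberg}; the time integration is a single application of Hölder in $t$ in each case.
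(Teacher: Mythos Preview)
Your proofs of \eqref{3D-E2.3-00} and \eqref{3D-E2.3-0} are correct and essentially coincide with the paper's, just packaged differently: you invoke the three-dimensional anisotropic Ladyzhenskaya inequality
\[
\|f\|_{L^4(\mathbb{R}^3)}^4\le C\,\|f\|_{L^2}\,\|\partial_1 f\|_{L^2}\,\|\partial_2 f\|_{L^2}\,\|\partial_3 f\|_{L^2}
\]
as a single black box, whereas the paper builds the same pointwise-in-$t$ bound by first applying two-dimensional Ladyzhenskaya in the horizontal variables, then Minkowski to pass from $L^\infty_v(L^2_h)$ to $L^2_h(L^\infty_v)$, then one-dimensional Agmon in $x_3$. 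Both routes extract exactly two horizontal-derivative factors out of four, and the time integration is identical.

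For \eqref{m0-E2.2}, however, your sketched route has a genuine gap. Controlling $\bigl\|\,\|g\|_{L^\infty_v}\bigr\|_{L^\infty_h}$ by $\bigl\|\,\|g\|_{L^\infty_v}\bigr\|_{H^2_h}$ and then ``distributing the two horizontal derivatives so that at least one horizontal derivative always lands'' does not work as stated: the zeroth-order piece of the $H^2_h$ norm produces, after the vertical Agmon step, a term of the type $\|\nabla\psi\|_{L^2}^{1/2}\|\partial_3\nabla\psi\|_{L^2}^{1/2}$ carrying no $\nabla_h$ at all (hence no $L^2_T$ integrability to exploit), while the second-order piece would require $\partial_3\nabla_h^2\nabla\psi\in L^2$, one derivative beyond the hypothesis $\nabla\psi\in L^\infty_T(H^2)$. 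The paper sidesteps both issues with a one-line argument: since $4>3$, the Sobolev embedding $W^{1,4}(\mathbb{R}^3)\hookrightarrow L^\infty(\mathbb{R}^3)$ gives
\[
\|\nabla\psi\|_{L^\infty(\mathbb{R}^3)}\le C\bigl(\|\nabla\psi\|_{L^4}+\|\nabla^2\psi\|_{L^4}\bigr),
\]
and taking the $L^4_T$ norm and inserting the already-proved estimates \eqref{3D-E2.3-00}--\eqref{3D-E2.3-0} immediately yields \eqref{m0-E2.2}.
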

\begin{proof}
Using Gagliardo-Nirenberg-Sobolev's inequality and Minkowski's inequality, we obtain
   \begin{eqnarray}
      \|\nabla \psi\|_{L^4_T(L^4(\mathbb{R}^3))}&\leq& C
      \left\|
      \|\nabla  \psi\|_{L^2_h}^\frac{1}{2}
            \|\nabla  \nabla_h  \psi\|_{L^2_h}^\frac{1}{2}
      \right\|_{L^4_T(L^4_v)}\nonumber\\
      &\leq&C
      \|\nabla  \psi\|_{L^\infty_T(L^\infty_v(L^2_h))}^\frac{1}{2}
            \|\nabla  \nabla_h  \psi\|_{L^2(L^2(\mathbb{R}^3))}^\frac{1}{2}\nonumber\\
      &\leq&C
      \|\nabla  \psi\|_{L^\infty_T(L^2_h(L^\infty_v))}^\frac{1}{2}
            \|\nabla  \nabla_h  \psi\|_{L^2(L^2(\mathbb{R}^3))}^\frac{1}{2}\nonumber\\
      &\leq&C
      \left\|
      \|\nabla  \psi\|_{L^2_v}^\frac{1}{2}
            \|\nabla  \partial_3\psi\|_{L^2_v}^\frac{1}{2}
      \right\|_{L^\infty_T(L^2_h)}^\frac{1}{2}
        \|\nabla  \nabla_h  \psi\|_{L^2(L^2(\mathbb{R}^3))}^\frac{1}{2}\nonumber\\
      &\leq&C
      \|\nabla \psi\|_{L^\infty_T(L^2(\mathbb{R}^3))}^\frac{1}{4}
            \|\nabla  \partial_3\psi\|_{L^\infty_T(L^2(\mathbb{R}^3))}^\frac{1}{4}
             \|\nabla  \nabla_h  \psi\|_{L^2_T(L^2(\mathbb{R}^3))}^\frac{1}{2}\nonumber\\
            &\leq& C      \| \nabla_h\nabla \psi\|_{L^2_T(L^2(\mathbb{R}^3))}^\frac{1}{2}
        \|   \nabla \psi\|_{L^\infty_T(H^1(\mathbb{R}^3))}^\frac{1}{2}.
    \end{eqnarray}
This proves (\ref{3D-E2.3-00}). Inequality (\ref{3D-E2.3-0}) is a direct
consequence of (\ref{3D-E2.3-00}). Combining
(\ref{3D-E2.3-00})-(\ref{3D-E2.3-0}) with
Gagliardo-Nirenberg-Sobolev's inequality again, we obtain (\ref{m0-E2.2}).
\end{proof}

By taking divergence of the $v$ equation of (\ref{m0-E1.5-N3}), we can
express the pressure function $p$ via
    \begin{equation}
      p=-2 \partial_3 \psi +\sum_{i,j=1}^3(-\Delta)^{-1}
      [\partial_iv_j\partial_j v_i+\partial_i\partial_j(\partial_i \psi\partial_j \psi)
      ].\label{m0-E1.6}
    \end{equation}
As in \cite{Lin12}, we substitute (\ref{m0-E1.6}) into (\ref{m0-E1.5-N3})
to obtain
    \begin{equation}
  \left\{
  \begin{array}{l}
    \partial_t \psi +v\cdot\nabla \psi + v_3=0,\ \ (t,x)\in\mathbb{R}^+\times\mathbb{R}^3,\\
        \partial_t v_h+v\cdot\nabla v_h -\Delta v_h - \nabla_h\partial_3\psi =f^h\\
     \ \ \ \    \ \ \ \:=-\displaystyle{\sum_{i,j=1}^3}\nabla_h(-\Delta)^{-1}
      [\partial_iv_j\partial_j v_i+\partial_i\partial_j(\partial_i \psi\partial_j \psi)
      ]-\displaystyle{\sum_{j=1}^3}\partial_j[\nabla_h \psi\partial_j \psi],\\
        \partial_t v_3+v\cdot\nabla v_3 -\Delta v_3 + \Delta_h\psi =f^v\\
    \ \ \ \   \ \ \ \:=-\displaystyle{\sum_{i,j=1}^3}\partial_3(-\Delta)^{-1}
      [\partial_iv_j\partial_j v_i+\partial_i\partial_j(\partial_i \psi\partial_j \psi)
      ]-\displaystyle{\sum_{j=1}^3}\partial_j[\partial_3 \psi\partial_j \psi],\\
            \mathrm{div}v=0,\\
            (\psi,v)|_{t=0}=(\psi_0,v_0).
  \end{array}
  \right.\label{m0-E1.7}
\end{equation}
Here, $\Delta_h=\partial_{x_1}^2+\partial_{x_{2}}^2$.

The next Lemma is a standard energy estimate.
\begin{lem}\label{m0-L2.6}
 Let   $(\psi,v)$ be sufficiently smooth functions which solve (\ref{m0-E1.5-N3}),
  then there holds
    \begin{eqnarray}
   &&\frac{d}{dt}\left\{
   \frac{1}{2}\left(
   \|v\|_{H^2}^2+\|\nabla \psi\|_{H^2}^2+\frac{1}{4 }\|\Delta\psi\|_{H^1}^2
   \right)+\frac{1}{4}(v_3|\Delta\psi)_{H^1}
   \right\}      \label{m0-L2.6-111} \\
        &&+\|\nabla v \|_{H^2}^2-\frac{1 }{4}\|\nabla v_3\|_{H^1}^2
        +\frac{1 }{4}\|\nabla\nabla_h\psi\|_{H^1}^2\nonumber\\
   &=&-(v\cdot\nabla v|v)_{H^2}
   +(v\cdot\nabla\psi|\Delta\psi)_{H^2}
   -(\mathrm{div}(\nabla\psi\otimes\nabla\psi)|v)_{H^2}
   -\frac{1}{4}(v\cdot\nabla v_3|\Delta\psi)_{H^1}
   \nonumber\\
        &&
   +\frac{1}{4}(f^v|\Delta\psi)_{H^1}
   +\frac{1}{4}(\nabla v_3|\nabla (v\cdot\nabla \psi))_{H^1}
   -\frac{1}{4 }(\Delta(v\cdot\nabla\psi)|\Delta\psi)_{H^1} .
   \nonumber
    \end{eqnarray}
\end{lem}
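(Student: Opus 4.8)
The plan is to obtain \eqref{m0-L2.6-111} by deriving three separate energy balances and adding them with weights $1$, $1$ and $\tfrac14$, where the choice of the weight $\tfrac14$ and of the cross quantity $(v_3|\Delta\psi)_{H^1}$ is dictated precisely by the need to cancel one coupling term that cannot be controlled on its own. I would work throughout with the pressure-eliminated form \eqref{m0-E1.7}, and every step is an integration by parts, legitimate for the smooth, spatially decaying solutions in question.

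First I would do the $H^2$ energy estimate for $v$: differentiate the $v_h$- and $v_3$-equations of \eqref{m0-E1.7} by $\partial^\alpha$, $|\alpha|\le 2$, pair with $\partial^\alpha v_h$ and $\partial^\alpha v_3$, and sum. The convection produces $-(v\cdot\nabla v|v)_{H^2}$, the Laplacian $\|\nabla v\|_{H^2}^2$, and — because $f^h$ and $f^v$ differ from $-\mathrm{div}[\nabla_h\psi\otimes\nabla\psi]$ and $-\mathrm{div}[\partial_3\psi\,\nabla\psi]$ only by a gradient, which is $L^2$-orthogonal to the divergence-free field $\partial^\alpha v$ — the forcing contributes exactly $-(\mathrm{div}(\nabla\psi\otimes\nabla\psi)|v)_{H^2}$. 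The one delicate term is the linear coupling $-(\nabla_h\partial_3\psi|v_h)_{H^2}+(\Delta_h\psi|v_3)_{H^2}$; here I would integrate the first term by parts and use $\mathrm{div}_h v_h=-\partial_3 v_3$ to turn it into $(\partial_3^2\psi|v_3)_{H^2}$, so that the coupling collapses to $(\Delta\psi|v_3)_{H^2}$. Next I would differentiate the $\psi$-equation of \eqref{m0-E1.7} and pair with $\nabla\psi$ in $H^2$; integration by parts gives $(\nabla(v\cdot\nabla\psi)|\nabla\psi)_{H^2}=-(v\cdot\nabla\psi|\Delta\psi)_{H^2}$ and $(\nabla v_3|\nabla\psi)_{H^2}=-(\Delta\psi|v_3)_{H^2}$. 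Adding these two balances, the two copies of $(\Delta\psi|v_3)_{H^2}$ cancel, and I am left with $\tfrac{d}{dt}\tfrac12(\|v\|_{H^2}^2+\|\nabla\psi\|_{H^2}^2)+\|\nabla v\|_{H^2}^2$ on the left and the first three right-hand terms of \eqref{m0-L2.6-111}.

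The third balance supplies the horizontal dissipation $\tfrac14\|\nabla\nabla_h\psi\|_{H^1}^2$. I would apply $\Delta$ to the $\psi$-equation and pair with $\Delta\psi$ in $H^1$, getting $\tfrac12\tfrac{d}{dt}\|\Delta\psi\|_{H^1}^2=-(\Delta(v\cdot\nabla\psi)|\Delta\psi)_{H^1}-(\Delta v_3|\Delta\psi)_{H^1}$, whose last term has too many derivatives and no usable sign. To cancel it, I would differentiate $(v_3|\Delta\psi)_{H^1}$ in time, inserting $\partial_t v_3$ from the $v_3$-equation of \eqref{m0-E1.7} and $\partial_t\Delta\psi=-\Delta(v\cdot\nabla\psi)-\Delta v_3$; this produces $+(\Delta v_3|\Delta\psi)_{H^1}$, the dissipation $-(v_3|\Delta v_3)_{H^1}=\|\nabla v_3\|_{H^1}^2$, the term $-(v_3|\Delta(v\cdot\nabla\psi))_{H^1}=(\nabla v_3|\nabla(v\cdot\nabla\psi))_{H^1}$, the coupling $-(\Delta_h\psi|\Delta\psi)_{H^1}$, and $-(v\cdot\nabla v_3|\Delta\psi)_{H^1}+(f^v|\Delta\psi)_{H^1}$. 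Taking $\tfrac14$ of the $\|\Delta\psi\|_{H^1}^2$-balance plus $\tfrac14$ of this cross-term balance, the two $\tfrac14(\Delta v_3|\Delta\psi)_{H^1}$ cancel; the elliptic identity $(\Delta_h\psi|\Delta\psi)_{H^1}=\|\nabla\nabla_h\psi\|_{H^1}^2$ (integration by parts with $\Delta_h=\mathrm{div}_h\nabla_h$ and $\nabla_h\Delta=\Delta\nabla_h$) turns the coupling into $-\tfrac14\|\nabla\nabla_h\psi\|_{H^1}^2$; moving this and $\tfrac14\|\nabla v_3\|_{H^1}^2$ to the left reproduces the remaining dissipation terms $-\tfrac14\|\nabla v_3\|_{H^1}^2+\tfrac14\|\nabla\nabla_h\psi\|_{H^1}^2$ of \eqref{m0-L2.6-111}, and the leftover right-hand terms are precisely $-\tfrac14(v\cdot\nabla v_3|\Delta\psi)_{H^1}+\tfrac14(f^v|\Delta\psi)_{H^1}+\tfrac14(\nabla v_3|\nabla(v\cdot\nabla\psi))_{H^1}-\tfrac14(\Delta(v\cdot\nabla\psi)|\Delta\psi)_{H^1}$. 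Summing the three weighted balances yields \eqref{m0-L2.6-111}.

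I expect the only real difficulty to be the design of the third balance: one has to notice that $(\Delta v_3|\Delta\psi)_{H^1}$ obstructs any stand-alone $H^1$ bound for $\Delta\psi$, and that the correction $\tfrac14(v_3|\Delta\psi)_{H^1}$ with exactly this weight both removes that obstruction and, through $(\Delta_h\psi|\Delta\psi)=\|\nabla\nabla_h\psi\|^2$, exposes the horizontal dissipation of $\psi$, which is the point of the lemma. The indefinite byproduct $-\tfrac14\|\nabla v_3\|_{H^1}^2$ is harmless, being dominated by the viscous dissipation $\|\nabla v\|_{H^2}^2$ already present on the left.
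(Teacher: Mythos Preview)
Your proposal is correct and follows essentially the same route as the paper. The only cosmetic difference is packaging: the paper pairs the momentum equations of \eqref{m0-E1.5-N3} (with pressure) against $v$ and then substitutes the $\psi$-equation to rewrite $-(\Delta\psi|v_3)_{H^2}$, whereas you pair the pressure-eliminated system \eqref{m0-E1.7} against $v$ and run a separate $\nabla\psi$-balance to obtain the same cancellation; for the cross term the paper pairs \eqref{m0-E1.7}$_3$ with $\Delta\psi$ and then rewrites $(\partial_t v_3|\Delta\psi)_{H^1}$ and $-(\Delta v_3|\Delta\psi)_{H^1}$ via the $\psi$-equation, which is algebraically identical to your ``$\Delta\psi$-balance plus time-derivative of $(v_3|\Delta\psi)_{H^1}$'' decomposition.
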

\begin{proof}
Taking the standard $H^2$ inner product of (\ref{m0-E1.5-N3})$_{2,3}$ with
$v$ and then using the integration by parts, we have
    \begin{eqnarray}
      &&\frac{1}{2}\frac{d}{dt}\|v\|_{H^2}^2
      +(v\cdot\nabla v|v)_{H^2}
            +\|\nabla v\|_{H^2}^2\nonumber\\
            &=&-( \nabla_h\partial_3\psi|v_h)_{H^2}
            -( (\Delta+\partial_3^2)\psi|v_3)_{H^2}
            -(\mathrm{div}(\nabla\psi\otimes\nabla \psi)|v)_{H^2}.\label{m0-E2.11}
    \end{eqnarray}
Since $\mathrm{div}v=0$, the integration by parts gives
        \begin{eqnarray}
        & &-( \nabla_h\partial_3\psi|v_h)_{H^2}
            = ( \partial_3 \psi|\mathrm{div}_h v_h)_{H^2}\nonumber\\
    & =& -(  \partial_3 \psi|\partial_3v_3)_{H^2}
          =  (  \partial_3^2 \psi| v_3)_{H^2}.
    \end{eqnarray}
From (\ref{m0-E1.5-N3})$_1$, we have
        \begin{eqnarray}
        & &-( \Delta\psi|v_3)_{H^2}
            = (  \Delta \psi|(\partial_t\psi+v\cdot\nabla \psi ))_{H^2}\nonumber\\
    & =&-\frac{1}{2}\frac{d}{dt}\|\nabla\psi\|_{H^2}^2+
    (  \Delta \psi| v\cdot\nabla \psi )_{H^2} .\label{m0-E2.13}
    \end{eqnarray}
Combining (\ref{m0-E2.11})-(\ref{m0-E2.13}), we deduce that
    \begin{eqnarray}
      &&\frac{1}{2}\frac{d}{dt}\left(\|v\|_{H^2}^2
      +\|\nabla\psi\|_{H^2}^2\right)
      +\|\nabla v\|_{H^2}^2\nonumber\\
            &=& -(v\cdot\nabla v|v)_{H^2}
         +
    (  \Delta \psi| v\cdot\nabla \psi )_{H^2}
            -(\mathrm{div}(\nabla\psi\otimes\nabla \psi)|v)_{H^2}.\label{m0-E2.14}
    \end{eqnarray}
Next we take the standard $H^1$ inner product of (\ref{m0-E1.7})$_{3}$
with $\Delta\psi$,
 using again the integration by parts, to obtain
    \begin{eqnarray}
      (\partial_tv_3|\Delta\psi)_{H^1}+(v\cdot\nabla v_3|\Delta\psi)_{H^1}
      -(\Delta v_3|\Delta\psi)_{H^1}
     &=&  - \|\nabla_h\nabla \psi\|_{H^1}^2
     +(f^v|\Delta\psi)_{H^1} .\label{m0-E2.15}
    \end{eqnarray}
From the equation (\ref{m0-E1.5-N3})$_1$ and again the integration by
parts, we get
    \begin{eqnarray}
      &&(\partial_tv_3|\Delta\psi)_{H^1}\nonumber\\
            &=&\frac{d}{dt}(v_3|\Delta\psi)_{H^1}
            -(v_3|\Delta\partial_t\psi)_{H^1}\nonumber\\
      &=&\frac{d}{dt}(v_3|\Delta\psi)_{H^1}
           +(v_3|\Delta(v\cdot\nabla\psi + v_3))_{H^1}\nonumber\\
      &=&\frac{d}{dt}(v_3|\Delta\psi)_{H^1}
           -(\nabla v_3|\nabla(v\cdot\nabla\psi))_{H^1}
           -\|\nabla v_3\|_{H^1}^2.
    \end{eqnarray}
We observe, by (\ref{m0-E1.5-N3})$_1$, that
    \begin{eqnarray}
      &&-(\Delta v_3|\Delta\psi)_{H^1}\nonumber\\
            &=&(\Delta(\partial_t\psi+v\cdot\nabla\psi )
            |\Delta\psi)_{H^1}\nonumber\\
      &=&\frac{1}{2 }\frac{d}{dt}\|\Delta\psi\|_{H^1}^2
      +(\Delta( v\cdot\nabla\psi )
            |\Delta\psi)_{H^1}.\label{m0-E2.17}
    \end{eqnarray}
Combining (\ref{m0-E2.15})-(\ref{m0-E2.17}), we hence conclude
    \begin{eqnarray}
      &&\frac{d}{dt}\left\{
      \frac{1}{2}\|\Delta\psi\|_{H^1}^2+(v_3|\Delta\psi)_{H^1}
      \right\}+ \|\nabla_h\nabla\psi\|_{H^1}^2- \|\nabla v_3\|_{H^1}^2\nonumber\\
            &=&-(v\cdot\nabla v_3|\Delta\psi)_{H^1}
                        +(f^v|\Delta\psi)_{H^1}+(\nabla v_3|\nabla(v\cdot\nabla \psi))_{H^1}
            -(\Delta(v\cdot\nabla\psi)|\Delta\psi) .\label{m0-E2.18}
    \end{eqnarray}
With (\ref{m0-E2.14}) and (\ref{m0-E2.18}), one can complete the proof.
\end{proof}

The following is the key \textit{a priori} estimate which is essential to the proof
of the main result of this paper.
\begin{lem}\label{m0-L2.3}
Let   $(\psi,v)$ be sufficiently smooth functions which solve (\ref{m0-E1.5-N3}) and  satisfy
  $\nabla \psi\in L^\infty_T(H^2(\mathbb{R}^3))$, $\nabla_h\nabla \psi\in L^2_T(H^1(\mathbb{R}^3))$,
   $v\in L^\infty_T(H^2(\mathbb{R}^3))$ and $\nabla v\in L^2_T(H^2(\mathbb{R}^3))$, then there holds
    \begin{equation}
   B_T^2    \leq C
               (\|v_0\|_{H^2(\mathbb{R}^3)}^2+\|\nabla\psi_0\|_{H^2(\mathbb{R}^3)}^2)
               + C  B_T^3(1+ B_T)^2.\label{m0-E2.19}
    \end{equation}
    where   $C$ is a positive constant independent of $T$.
\end{lem}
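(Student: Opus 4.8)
The plan is to start from the energy identity \eqref{m0-L2.6-111} of Lemma \ref{m0-L2.6}, integrate it in time over $[0,T]$, and show that every term on the right-hand side can be absorbed into $C B_T^3(1+B_T)^2$ up to the dissipative quantities on the left. The first issue is that the left-hand side of \eqref{m0-L2.6-111} is not manifestly coercive: the ``good'' term $\|\nabla v\|_{H^2}^2$ is diminished by $-\tfrac14\|\nabla v_3\|_{H^1}^2$, and similarly the Lyapunov functional inside $\tfrac{d}{dt}\{\cdots\}$ contains the indefinite cross term $\tfrac14(v_3|\Delta\psi)_{H^1}$. I would first observe that, since $\|\nabla v_3\|_{H^1}\le\|\nabla v\|_{H^2}$, we have $\|\nabla v\|_{H^2}^2-\tfrac14\|\nabla v_3\|_{H^1}^2\ge\tfrac34\|\nabla v\|_{H^2}^2$, so the parabolic dissipation of $v$ survives, and the remaining good terms control $\tfrac14\|\nabla\nabla_h\psi\|_{H^1}^2$, i.e. exactly the $\|\nabla_h\nabla\psi\|_{L^2_T(H^1)}^2$ appearing in $B_T^2$. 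Likewise, by Cauchy--Schwarz and Young, $|\tfrac14(v_3|\Delta\psi)_{H^1}|\le \tfrac18\|v_3\|_{H^1}^2+\tfrac18\|\Delta\psi\|_{H^1}^2\le C(\|v\|_{H^2}^2+\|\nabla\psi\|_{H^2}^2)$, with a small enough constant that the bracketed functional is comparable to $\|v\|_{H^2}^2+\|\nabla\psi\|_{H^2}^2+\|\Delta\psi\|_{H^1}^2$ from above and below. Integrating in $t$ then yields, after discarding the nonnegative contribution of the functional at time $T$ and bounding it at $t=0$ by $C B_0^2$, an inequality of the shape $B_T^2\le C B_0^2 + (\text{time integrals of the seven RHS terms})$.

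The core of the work is estimating those seven time-integrated terms, and this is where I expect the main obstacle to lie. The benign ones are the purely fluid trilinear terms: $\int_0^T(v\cdot\nabla v|v)_{H^2}\,dt$ is a standard Navier--Stokes-type term handled by $H^2\hookrightarrow L^\infty$ and Young's inequality to produce $C\|v\|_{L^\infty_T H^2}\int_0^T\|\nabla v\|_{H^2}^2\,dt\le C B_T^3$; the term $(\mathrm{div}(\nabla\psi\otimes\nabla\psi)|v)_{H^2}$ and the $f^v$ contribution (recall $f^v$ is a sum of Riesz-type operators applied to quadratic expressions in $v$ and in $\nabla\psi$, plus $\partial_j(\partial_3\psi\,\partial_j\psi)$) are bilinear in $\nabla\psi$ or mixed in $v,\nabla\psi$, and are controlled by combining $L^\infty_T H^2$ bounds with the $L^2_T H^3$ dissipation of $\psi$ in the horizontal directions and of $v$. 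The genuinely delicate terms are those involving $v\cdot\nabla\psi$ paired against $\Delta\psi$, specifically $(\Delta\psi|v\cdot\nabla\psi)_{H^2}$, $(v\cdot\nabla v_3|\Delta\psi)_{H^1}$, $(\nabla v_3|\nabla(v\cdot\nabla\psi))_{H^1}$ and $(\Delta(v\cdot\nabla\psi)|\Delta\psi)_{H^1}$: here the highest-order piece is of the type $\int_0^T\!\!\int \partial_3 v_3\,(\partial_3^3\psi)^2\,dx\,dt$ flagged in the introduction, for which the naive bound fails because $\partial_3\psi$ has no parabolic smoothing. For this I would follow exactly the trick in \eqref{idea}: use the first equation $(\ref{m0-E1.5-N3})_1$ to write $\partial_3 v_3 = -\partial_3(\partial_t\psi+v\cdot\nabla\psi)$, integrate by parts in $t$ to spend a time derivative (generating a boundary term $\big[\int\partial_3\psi(\partial_3^3\psi)^2\big]_0^T$, bounded by $C B_T^3 + C B_0^3$ via $\nabla\psi\in L^\infty_T L^\infty$), and handle the residual $\int_0^T\!\!\int \partial_3(v\cdot\nabla\psi)\,(\partial_3^3\psi)^2$ by a second application of the same substitution or by direct interpolation, arriving at a bound $C\|\nabla\psi\|_{L^4_T L^\infty}^2\|\nabla v\|_{L^2_T L^2}\|\partial_3^3\psi\|_{L^\infty_T L^2}^2\le C B_T^4$ (and the extra $(1+B_T)^2$ factor in \eqref{m0-E2.19} absorbs the iterated lower-order products the substitution creates). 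The key analytic input throughout is Lemma \ref{m0-L2.2}, which gives $\nabla\psi\in L^4_T(L^\infty)$ and $\nabla^2\psi\in L^4_T(L^4)$ with norms $\le C B_T$, so that products like $\nabla\psi\cdot\nabla\psi$ or $\nabla\psi\,\nabla^2\psi$ hitting an $L^\infty_T L^2$ or $L^2_T L^2$ quantity close up at cubic or quartic order in $B_T$.

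A second structural point I would use repeatedly, as advertised in technical point (2), is $\mathrm{div}\,v=0$, which lets me trade a vertical derivative of $v_3$ for horizontal derivatives of $v_h$: whenever an estimate threatens to need $\partial_3$-dissipation of $\psi$ (which is absent), I integrate by parts to move the $\partial_3$ onto $v_3$ and then use $\partial_3 v_3=-\mathrm{div}_h v_h$ to convert it into $\nabla_h v_h$, which does enjoy $L^2_T H^2$ control, or conversely move horizontal derivatives onto $\nabla_h\psi$, which enjoys the $L^2_T H^1$ dissipation. Assembling all pieces, every right-hand term of the integrated \eqref{m0-L2.6-111} is bounded by $C B_T^3(1+B_T)^2$, and combined with the coercivity discussion above and $B_0^2=\|v_0\|_{H^2}^2+\|\nabla\psi_0\|_{H^2}^2$ this gives precisely \eqref{m0-E2.19}. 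The main obstacle, to reiterate, is organizing the high-order $\psi$--$\psi$--$v$ interactions so that the lack of $\partial_3$-smoothing of $\psi$ is systematically compensated by the $(\ref{m0-E1.5-N3})_1$-substitution-plus-interpolation device of \eqref{idea} together with the incompressibility algebra; once the right term-by-term bookkeeping is fixed, each estimate is routine Hölder--interpolation--Young.
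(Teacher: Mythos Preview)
Your proposal is correct and follows essentially the same route as the paper: integrate the energy identity \eqref{m0-L2.6-111}, verify coercivity of the left-hand side, reduce to estimating the seven nonlinear terms, and treat the dangerous pure-$\partial_3$ pieces (ultimately $\int_0^T\!\int \partial_3 v_3\,(\partial_3^3\psi)^2$) by the substitution $v_3=-(\partial_t\psi+v\cdot\nabla\psi)$ combined with the $L^4_T L^\infty$ interpolation of Lemma~\ref{m0-L2.2} and the $\mathrm{div}\,v=0$ swap. The paper merely packages the two hardest pure-vertical estimates into separate technical Lemmas~\ref{m0-L2.4} and~\ref{m0-L2.5} (with an auxiliary Lemma in the Appendix implementing the iterated substitution), but the strategy and the individual steps are the same as what you outline.
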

\begin{proof}
By the energy estimate (\ref{m0-L2.6-111}) and the definition of $ B_T$,  we get
for a positive constant $C$ (independent of $T$) that
    \begin{eqnarray}
      B_T^2&\leq&CB_0^2+C\left|\int^T_0(v\cdot\nabla v|v)_{H^2}dt\right|
   +C\left|\int^T_0(v\cdot\nabla\psi|\Delta\psi)_{H^2}dt\right|
   \nonumber\\
        &&
   +C\left|\int^T_0(\mathrm{div}(\nabla\psi\otimes\nabla\psi)|v)_{H^2}dt\right| +C\left|\int^T_0(v\cdot\nabla v_3|\Delta\psi)_{H^1}dt\right|
    +C\left|\int^T_0(f^v|\Delta\psi)_{H^1}dt\right|\nonumber\\
        &&
   +C\left|\int^T_0(\nabla v_3|\nabla (v\cdot\nabla \psi))_{H^1}dt\right|
   +C\left|\int^T_0(\Delta(v\cdot\nabla\psi)|\Delta\psi)_{H^1} dt\right|\nonumber\\
   &:=&CB_0^2+\sum_{j=1}^7I_j.\label{m0-E2.18-00}
    \end{eqnarray}

       We are going to estimate term by term the right hand side of the
inequality. The basic strategies involved in estimating all such quantities
are the same. More precisely, we estimate separately terms involving
horizontal derivatives and terms with vertical derivatives. For terms with
horizontal derivatives $\nabla_h \psi$, one can use the dissipations implied by the energy
equality (\ref{m0-L2.6-111}). For terms containing vertical derivatives, we
use the algebriac relation (deduced from that $\mathrm{div}v=0$) and the
transport equations. The latter reduces space-time estimates to bounds
on time-slices and terms with either horizontal derivatives or of higher
order nonlinearities (hence they are smaller under our smallness
assumptions on the initial data).
       To illustrate the basic idea, we start with the second term $I_2$.
Applying the Gagliardo-Nirenberg-Sobolev type estimates in Lemma \ref{m0-L2.2}, and
use the fact that $\mathrm{div}v=0$,
H\"{o}lder and Sobolev inequalities, we deduce that
    \begin{eqnarray}
     && I_2=C\left|\int^T_0(v\cdot\nabla\psi|\Delta\psi)_{H^2}dt\right|\nonumber\\
            &=&C\left| \sum_{|\alpha|\leq2}\sum_{i=1}^3\int^T_0\int
            [\partial^\alpha\partial_i(v\cdot\nabla\psi)-v\cdot\nabla\partial^\alpha\partial_i\psi]\partial^\alpha\partial_i\psi dxdt
            \right|
            \nonumber\\
     &\leq& C\|\nabla v\|_{L^2_T(L^2(\mathbb{R}^3))}\|\nabla \psi\|_{L^4_T(L^4(\mathbb{R}^3))}^2
     +C\|\nabla^2\psi\|_{L^4_T(L^4(\mathbb{R}^3))}(\|\nabla^2 v\|_{L^2_T(L^2(\mathbb{R}^3))}\|\nabla\psi\|_{L^4_T(L^4(\mathbb{R}^3))}\nonumber\\
     &&
     +\|\nabla  v\|_{L^2_T(L^2(\mathbb{R}^3))}\|\nabla^2\psi\|_{L^4_T(L^4(\mathbb{R}^3))})
      +C\|\nabla_h\nabla^2\psi\|_{L^2_T(L^2(\mathbb{R}^3))}(\|\nabla^3 v\|_{L^2_T(L^2(\mathbb{R}^3))}\|\nabla\psi\|_{L^\infty_T(L^\infty(\mathbb{R}^3))}
    \nonumber\\
                &&  +\|\nabla^2 v\|_{L^2_T(L^4(\mathbb{R}^3))}\|\nabla^2\psi\|_{L^\infty_T(L^4(\mathbb{R}^3))}
          +\|\nabla  v\|_{L^2_T(L^\infty(\mathbb{R}^3))}\|\nabla^3 \psi\|_{L^\infty_T(L^2(\mathbb{R}^3))}
     )\nonumber\\
             &&+C\|\nabla^3\psi\|_{L^\infty_T(L^2(\mathbb{R}^3))}
             (\|\nabla^3 v_h\|_{L^2_T(L^2(\mathbb{R}^3))}\|\nabla_h\psi\|_{L^2_T(L^\infty(\mathbb{R}^3))}
            +\|\nabla^2 v_h\|_{L^2_T(L^4(\mathbb{R}^3))}\|\nabla\nabla_h\psi\|_{L^2_T(L^4(\mathbb{R}^3))}\nonumber\\
                &&
          +\|\nabla  v_h\|_{L^2_T(L^\infty(\mathbb{R}^3))}\|\nabla^2\nabla_h \psi\|_{L^2_T(L^2(\mathbb{R}^3))}
     )\nonumber\\
        &&+C\left|\int^T_0\int \partial_3^3\psi(\partial_3^3v_3\partial_3\psi+
        3\partial_3^2v_3\partial_3^2\psi
        +3\partial_3v_3\partial_3^3\psi)dxdt\right|. \label{m0-E2.19-0}
    \end{eqnarray}
To estimate the last term in the above inequality (\ref{m0-E2.19-0}), we
need the following two technical lemmas. The proofs of these two
Lemmas will be given in the Appendix.
\begin{lem}\label{m0-L2.4}
 Under the conditions in Lemma \ref{m0-L2.3}, then there holds
    \begin{eqnarray}
   && \left|\int^T_0\int_{\mathbb{R}^3}  \partial_3\psi  \partial_3^3\psi\partial_3^3 v_3  dxdt
    \right|
    +\left|\int^T_0\int_{\mathbb{R}^3}  \partial_3^2\psi  \partial_3^3\psi\partial_3^2 v_3  dxdt
    \right|
    +\left|\int^T_0\int_{\mathbb{R}^3}  \partial_3 \psi  \partial_3^3\psi\partial_3^2 v_3\partial_3^2\psi  dxdt
    \right|
     \nonumber \\
    &\leq& C\|\nabla\psi\|_{L^\infty_T(H^2(\mathbb{R}^3))}\|\nabla_h\nabla\psi\|_{L^2_T(H^1(\mathbb{R}^3))}
            \|\nabla v\|_{L^2_T(H^2(\mathbb{R}^3))}(1+\|\nabla\psi\|_{L^\infty_T(H^2(\mathbb{R}^3))}),\label{m0-E2.7}
    \end{eqnarray}
    where   $C$ is a positive constant  independent of $T$.
\end{lem}
\begin{lem}\label{m0-L2.5}
 Under the conditions in Lemma \ref{m0-L2.3}, then there holds
    \begin{eqnarray}
     \left|\int^T_0\int_{\mathbb{R}^3}  \partial_3 v_3(\partial_3^3\psi)^2 dxdt
    \right|
    &\leq&  C B_T^3(1+B_T^2),
    \end{eqnarray}
    where   $C$ is a positive constant  independent of $T$.
\end{lem}

Accepting these two lemmas, we proceed with our proof of the key estimate
in Lemma \ref{m0-L2.3}.
By (\ref{m0-E2.19-0}) and Lemmas \ref{m0-L2.4}-\ref{m0-L2.5}, we obtain
    \begin{eqnarray}
  I_2=C\left|\int^T_0(v\cdot\nabla\psi|\Delta\psi)_{H^2}dt\right|
           &\leq& C B_T^3(1 +B_T)^2.  \label{m0-E2.20}
    \end{eqnarray}
Similarly, one can estimate that
        \begin{eqnarray}
     I_7&=&C\left|\int^T_0(\Delta(v\cdot\nabla\psi)|\Delta\psi)_{H^1}dt\right|\nonumber\\
            &=& C\left| \sum_{i=1}^3
            \int^T_0 (\partial_i(v\cdot\nabla\psi)|
            \partial_i\Delta\psi)_{H^1}dt\right| \nonumber\\
            &\leq& C B_T^3(1+ B_T)^2.
    \end{eqnarray}
Next, we estimate $I_3$ as follows:
    \begin{eqnarray}
      I_3&=&C\left|\int^T_0(\mathrm{div}(\nabla\psi\otimes\nabla\psi)|v)_{H^2}dt\right|\nonumber\\
            &=&C\left| \sum_{|\alpha|\leq2}\sum_{i,j=1}^3\int^T_0\int
            \partial^\alpha(\partial_i\psi\partial_j\psi)\partial^\alpha\partial_i v_j dxdt\right|
            \nonumber\\
      &\leq& C\|\nabla v\|_{L^2_T(L^2(\mathbb{R}^3))}\|\nabla\psi\|_{L^4_T(L^4(\mathbb{R}^3))}^2
      +C\|\nabla^2 v\|_{L^2_T(L^2(\mathbb{R}^3))}\|\nabla\psi\|_{L^4_T(L^4(\mathbb{R}^3))}\|\nabla^2\psi\|_{L^4_T(L^4(\mathbb{R}^3))}
      \nonumber\\
                &&+C\|\nabla^3v\|_{L^2_T(L^2(\mathbb{R}^3))}
                (\|\nabla^2\psi\|_{L^4_T(L^4(\mathbb{R}^3))}^2
                +\|\nabla_h\psi\|_{L^2_T(L^\infty(\mathbb{R}^3))}\|\nabla^3\psi\|_{L^\infty_T(L^2(\mathbb{R}^3))}
                \nonumber\\
      &&+\|\nabla\psi\|_{L^\infty_T(L^\infty(\mathbb{R}^3))}\|\nabla^2\nabla_h\psi\|_{L^2_T(L^2(\mathbb{R}^3))}
                )+C\left|\int^T_0\int\partial_3^3v_3\partial_3\psi\partial_3^3\psi dxdt\right|
                \nonumber\\
    &\leq& C B_T^3.\label{m0-E2.23}
    \end{eqnarray}
    Apply the same line of arguments, one can deduce that
    \begin{eqnarray}
     &&\left|\sum_{i,j=1}^3\int^T_0
      (\partial_3(-\Delta)^{-1}(\partial_iv_j\partial_j v_i)|\Delta\psi)_{H^1}dt\right|\nonumber\\
            &\leq&C\int^T_0\|(\nabla v)^2\|_{H^1(\mathbb{R}^3)}\|\nabla\psi\|_{H^1(\mathbb{R}^3)} dt\nonumber\\
      &\leq&  C\|\nabla
v\|_{L^2_T(H^2(\mathbb{R}^3))}^2\|\nabla\psi\|_{L^\infty_T(H^2(\mathbb{R}^3))}. \label{m0-E2.24}
    \end{eqnarray}

    Similarly, one has
       \begin{eqnarray}
    &&\left|\int^T_0
    \left(\left.-\sum_{i,j=1}^3\partial_3(-\Delta)^{-1}(
    \partial_i\partial_j(\partial_i\psi\partial_j\psi))-
    \sum_{j=1}^3\partial_j(\partial_3\psi\partial_j\psi)\right|\Delta\psi
    \right)_{H^1}
    dt\right|\nonumber\\
        &=&\left|\int^T_0
    \left(\left.-\sum_{i,j=1}^2\partial_3(-\Delta)^{-1}(
    \partial_i\partial_j(\partial_i\psi\partial_j\psi))  \right|\Delta\psi
    \right)_{H^1}
    dt\right|\nonumber\\
        &&+\left|\int^T_0
   \sum_{j=1}^2 \left(\left.   2\partial_3(-\Delta)^{-1} (
    \partial_3(\partial_3\psi\partial_j\psi))+
     (\partial_3\psi\partial_j\psi)\right|\partial_j\Delta\psi
    \right)_{H^1}
    dt\right|\nonumber\\
    &&+\left|\int^T_0
     ( -  (-\Delta)^{-1}
    \partial_3^3 (\partial_3\psi)^2  -
     \partial_3(\partial_3\psi )^2 |\Delta\psi
     )_{H^1}dt\right|\nonumber\\
            &\leq& C\int^T_0
            \|\nabla (\nabla_h\psi)^2\|_{H^1(\mathbb{R}^3)}\|\Delta\psi\|_{H^1(\mathbb{R}^3)}dt
            +C\int^T_0
            \|\nabla (\nabla \psi\nabla_h\psi) \|_{H^1(\mathbb{R}^3)}\|\nabla\nabla_h\psi\|_{H^1(\mathbb{R}^3)}dt
            \nonumber\\
    &&+\left|\int^T_0
     ( \sum_{i=1}^2  (-\Delta)^{-1}
    \partial_3\partial_i^2 (\partial_3\psi)^2 |\Delta\psi
     )_{H^1}dt\right|\nonumber\\
            &\leq& C\|\nabla\nabla_h \psi\|_{L^2_T(H^1(\mathbb{R}^3))}^2\|\nabla\psi\|_{L^\infty_T(H^2(\mathbb{R}^3))}
            +C\int^T_0\|\nabla_h(\nabla\psi)^2\|_{H^1(\mathbb{R}^3)}\|\nabla\nabla_h\psi\|_{H^1(\mathbb{R}^3)}dt\nonumber\\
            &\leq& C\|\nabla\nabla_h \psi\|_{L^2_T(H^1(\mathbb{R}^3))}^2\|\nabla\psi\|_{L^\infty_T(H^2(\mathbb{R}^3))}.
            \label{m0-E2.25}
        \end{eqnarray}
Combining (\ref{m0-E2.24})-(\ref{m0-E2.25}), one concludes
    \begin{equation}
      I_5=C\left| \int^T_0
       (f^v|\Delta\psi)_{H^1}dt\right|\leq CB_T^3.            \label{m0-E2.26}
    \end{equation}

One can obtain the following estimates in the same way, to save the ink,
we omit the details.
    \begin{equation}
     I_1=C\left| \int^T_0(v\cdot\nabla v|v)_{H^2}dt\right|\leq C\|\nabla v\|_{L^2_T(H^2(\mathbb{R}^3))}^2\|v\|_{L^\infty_T(H^2(\mathbb{R}^3))},
    \end{equation}
        \begin{equation}
           I_4=C\left|\int^T_0 (v\cdot\nabla v_3|\Delta\psi)_{H^1}dt\right|
           \leq C\|\nabla v\|_{L^2_T(H^2(\mathbb{R}^3))}^2\|\nabla \psi\|_{L^\infty_T(H^2(\mathbb{R}^3))},
        \end{equation}
        \begin{equation}
        I_6=C\left|  \int^T_0 (\nabla v_3|\nabla (v\cdot\nabla \psi))_{H^1}dt\right|
          \leq C\|\nabla v\|_{L^2_T(H^2(\mathbb{R}^3))}^2\|\nabla \psi\|_{L^\infty_T(H^2(\mathbb{R}^3))},\label{m0-E2.34}
            \end{equation}
Summing up  (\ref{m0-E2.18-00}), (\ref{m0-E2.20})-(\ref{m0-E2.23}) and
(\ref{m0-E2.26})-(\ref{m0-E2.34}), we conclude (\ref{m0-E2.19}).
\end{proof}

\begin{rem}
  In several places of our proofs,  we have used the fact that
    $$
    \|\nabla_h\psi\|_{L^2_T(L^\infty(\mathbb{R}^3))}\leq C    \|\nabla_h\psi\|_{L^2_T(W^{1,6}(\mathbb{R}^3))}
    \leq C   \|\nabla\nabla_h\psi\|_{L^2_T(H^{1}(\mathbb{R}^3))},
    $$
    which is a direct consequence of Sobolev embedding Theorem.
When the spatial dimension is two,  we cannot use
$\|\nabla\nabla_h\psi\|_{L^2_T(H^{1}(\mathbb{R}^2))}$ to bound
$\|\nabla_h\psi\|_{L^2_T(L^\infty(\mathbb{R}^2))}$. So it is necessary
to make various changes in order for the proofs in this article to work in
the case that the spatial dimension is two.  On the other hand, if one
assume that the initial data are in
$H^2\times \dot{H}^{-s}(\mathbb{R}^2)$, $s\in (\frac{1}{2},1)$, then use $\|\nabla^{1+s}\nabla_h\psi\|_{L^2_T(H^{1}(\mathbb{R}^2))}$ to bound
$\|\nabla_h\psi\|_{L^2_T(L^\infty(\mathbb{R}^2))}$ and obtain the similar
result though the arguments are technically more complicated.
\end{rem}

\section{Proof of Theorem \ref{m0-Thm1.2}}\label{m0-S3}
Via the analysis in \cite{Majda84}, one can get the following local
existence result following now the standard arguments:
\begin{thm}\label{m0-Thm3.1}
Assume that the initial data $(\psi_0,v_0)$ satisfy $(\nabla \psi_0,v_0)\in H^2(\mathbb{R}^3)\times H^2(\mathbb{R}^3)$,   then there exists
     $T_0>0$ such that
     the system (\ref{m0-E1.5-N3}) has a unique local solution $(\psi,v,\nabla p)$  on $[0,T_0]$ satisfying
        \begin{equation}
          \nabla \psi, v\in C([0,T_0];H^2(\mathbb{R}^3)),\  \nabla v\in L^2([0,T_0];H^2(\mathbb{R}^3)), \label{m0-E3.1}
        \end{equation}
        \begin{equation}
          \nabla p\in L^\infty([0,T_0];H^{ 1} (\mathbb{R}^3)) .\label{m0-E3.3}
        \end{equation}
\end{thm}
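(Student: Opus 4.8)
The statement is the standard local well-posedness result for the hyperbolic--parabolic system (\ref{m0-E1.5-N3}), and I would prove it by the usual scheme: construct approximate solutions by iteration, obtain uniform \textit{a priori} bounds on a short time interval, pass to the limit, prove uniqueness by an energy estimate, and recover the pressure a posteriori from (\ref{m0-E1.6}). I only sketch the structure, since (as the text already indicates) the argument is the classical one of \cite{Majda84}.

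\textbf{Iteration scheme.} Set $(\psi^0,v^0)=(\psi_0,v_0)$ and, given $(\psi^n,v^n)$ with $\mathrm{div}\,v^n=0$, let $\psi^{n+1}$ solve the linear transport equation $\partial_t\psi^{n+1}+v^n\cdot\nabla\psi^{n+1}+v^n_3=0$ with data $\psi_0$ (solved along the flow of $v^n$), and let $v^{n+1}$ solve the linear parabolic (projected Stokes) problem obtained from (\ref{m0-E1.5-N3})$_{2,3,4}$ by applying the Leray projection, with transport coefficient $v^n$ and the $\psi$-dependent terms evaluated at $\psi^{n+1}$, with data $v_0$. Each step is a well-posed linear problem producing a solution in the class (\ref{m0-E3.1}); in particular $v^{n+1}$ is automatically divergence free.

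\textbf{Uniform estimate.} Put $\mathcal E_n(t)=\|\nabla\psi^n(t)\|_{H^2}^2+\|v^n(t)\|_{H^2}^2+\int_0^t\|\nabla v^n\|_{H^2}^2\,ds$. Running the $H^2$ energy estimate on the transport equation for $\psi^{n+1}$ (controlling $\int_0^t\|\nabla v^n\|_{L^\infty}\,ds\lesssim t^{1/2}\|\nabla v^n\|_{L^2_t(H^2)}$ via $H^2(\mathbb R^3)\hookrightarrow L^\infty$) together with the parabolic $H^2$ energy estimate on $v^{n+1}$ — in which the linear coupling terms $\nabla_h\partial_3\psi^{n+1}$ and $(\Delta+\partial_3^2)\psi^{n+1}$ are handled exactly as in the proof of Lemma \ref{m0-L2.6}, i.e.\ by integration by parts using $\mathrm{div}\,v^{n+1}=0$, which trades a vertical derivative for a horizontal one and leaves only terms bounded by $\|\nabla\psi^{n+1}\|_{H^2}\|v^{n+1}\|_{H^2}$ — yields an inequality of the form $\tfrac{d}{dt}\mathcal E_{n+1}+\|\nabla v^{n+1}\|_{H^2}^2\le C(\mathcal E_{n+1}+\mathcal E_n+\mathcal E_n\mathcal E_{n+1}+\mathcal E_n^{3/2}+\cdots)$. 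A standard continuity/bootstrap argument then produces $T_0=T_0(\|\nabla\psi_0\|_{H^2}+\|v_0\|_{H^2})>0$ and $M>0$ with $\sup_{[0,T_0]}\mathcal E_n\le M$ for all $n$.

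\textbf{Convergence, uniqueness, pressure.} Estimating the differences $(\psi^{n+1}-\psi^n,v^{n+1}-v^n)$ in the weaker norm $C_{T_0}(L^2)$ (plus $L^2_{T_0}(H^1)$ for the velocity), by the same energy method — with the quadratic terms absorbed by the uniform bound $M$ and the linear $\psi$-terms again treated by the $\mathrm{div}\,v=0$ integration by parts — gives a contraction for $T_0$ small; hence $(\psi^n,v^n)$ converges, and interpolating the $C_{T_0}(L^2)$ convergence against the uniform $H^2$ bound upgrades it enough to pass to the limit in (\ref{m0-E1.5-N3}), with the limit lying in the class (\ref{m0-E3.1}) by lower semicontinuity of the norms. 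The identical difference estimate applied to two solutions with the same data gives uniqueness. Finally $\nabla p$ is defined by (\ref{m0-E1.6}); using $\nabla\psi\in L^\infty_{T_0}(H^2)$, the Calderón--Zygmund bounds for $\nabla^2(-\Delta)^{-1}$, the embeddings $H^2(\mathbb R^3)\hookrightarrow W^{1,6}\cap L^\infty$, and the identity $\sum_{i,j}\partial_i\partial_j(v_iv_j)=\sum_{i,j}\partial_iv_j\,\partial_jv_i$ coming from $\mathrm{div}\,v=0$, one checks that $(\nabla\psi)^2$ and $(\nabla v)^2$ lie in $L^\infty_{T_0}(L^2)$, whence $\nabla p\in L^\infty_{T_0}(H^1)$, which is (\ref{m0-E3.3}). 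The only non-routine point in the whole argument is closing the uniform estimate in spite of the apparent derivative loss in the linear terms $\nabla_h\partial_3\psi$ and $(\Delta+\partial_3^2)\psi$ of the velocity equation, and this is resolved precisely by the incompressibility-based integration by parts already used in Lemma \ref{m0-L2.6}; no finer cancellation structure is needed for the local result.
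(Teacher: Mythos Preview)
The paper does not give a proof of Theorem \ref{m0-Thm3.1}; it merely states that the result follows from the standard arguments in \cite{Majda84}. Your sketch is exactly such a standard hyperbolic--parabolic local well-posedness argument (iteration, uniform $H^2$ bounds on a short interval, contraction in a weaker norm, pressure recovered from (\ref{m0-E1.6})), and is therefore fully in line with what the paper intends.
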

\noindent\textbf{Proof of Theorem \ref{m0-Thm1.2}.}
Theorem \ref{m0-Thm3.1} implies that the system (\ref{m0-E1.5-N3}) has a unique
 local strong solution $(\psi,v,\nabla p)$ on $[0,T^*)$, where $[0,T^{*})$
is the maximal existence time interval for the above solution. Our goal
is to prove $T^*=\infty$ provided that the initial data $(\psi_0,v_0)$ satisfy (\ref{1.6}).

Assume that $(\psi,v,\nabla p)$ is the unique local strong solution of (\ref{m0-E1.5-N3}) on $[0,T^*)$, and satisfies (\ref{m0-E3.1})-(\ref{m0-E3.3}).
From  (\ref{m0-E2.19}), we have
    \begin{equation}
    B_T^2    \leq C
               (\|v_0\|_{H^2(\mathbb{R}^3)}^2+\|\nabla\psi_0\|_{H^2(\mathbb{R}^3)}^2)
               + C  B_T^3(1+ B_T)^2,
      \end{equation}
for all $T\in (0,T^*)$.
If  the initial data $(\psi_0,v_0)$ satisfy (\ref{1.6}),  where $c_0$ satisfies
\begin{equation}
   C\sqrt{2C }c_0(1+ \sqrt{2C }c_0)^2\leq \frac{1}{2},
\end{equation}
then one can easily obtain
    \begin{equation}
     B_T^2\leq 2CB_0^2, \ \textrm{ for all }T\in (0,T^*).
    \end{equation}
As the right hand side of the last inequality above remains to be small,
we must have that $T^*=\infty$,  and hence (\ref{1.7}) holds.
From (\ref{m0-E1.6}), we see that $\nabla p\in L^\infty([0,\infty);H^1)$
and (\ref{1.8}) holds. This finishes the proof of  Theorem
\ref{m0-Thm1.2}.
{\hfill
$\square$\medskip}

\section*{Appendix}
Here we shall give the proofs of two technical Lemmas
\ref{m0-L2.4}-\ref{m0-L2.5} that are needed in establishing the key \textit{a
priori} estimates. As $\mathrm{div}v=0$, we can replace
$\mathrm{div}_hv_h=\partial_1 v_1+\partial_2 v_2$  by $\partial_3 v_3$
in various calculations in the proof of Lemma \ref{m0-L2.4}. Sometime, it would
be useful (and it may be also necessary) to replace $v_3$. In fact,
via (\ref{m0-E1.5-N3})$_1$, we can re-write
    \begin{equation}
      v_3=-(\partial_t\psi+v\cdot\nabla \psi ).\label{m0-E2.5}
    \end{equation}
The above substitution for $v_3$  has the advantage that it reduces
space-time integral estimates to estimates on time slices and space times
integral with higher order nonlinearities and fast dissipation. The latter
is smaller by the initial smallness assumptions.

\noindent \textbf{Proof of Lemma \ref{m0-L2.4}.}
  Using the integration by parts, the fact that $\mathrm{div} v=0$, the
 H\"{o}lder's inequality and the Sobolev embedding Theorem,
  we can estimate the first term in the lemma \ref{m0-L2.4} as follows:
    \begin{eqnarray*}
      &&\left|\int^T_0\int  \partial_3\psi  \partial_3^3\psi\partial_3^3 v_3  dxdt
    \right|\\
            &=&\left|\int^T_0\int  \partial_3\psi  \partial_3^3\psi\partial_3^2\mathrm{div}_h v_h  dxdt
    \right|\\
    &=&\left| \int^T_0\int \left(-\partial_3^2  v_h\cdot\nabla_h\partial_3\psi  \partial_3^3\psi
    -\partial_3\psi \partial_3^2  v_h\cdot\nabla_h\partial_3^3\psi
    \right)  dxdt
    \right|\\
    &=&\left| \int^T_0\int \left(-\partial_3^2  v_h\cdot\nabla_h\partial_3\psi  \partial_3^3\psi
    +\partial_3^2\psi \partial_3^2  v_h\cdot\nabla_h\partial_3^2\psi
    +\partial_3\psi \partial_3^3  v_h\cdot\nabla_h\partial_3^2\psi
    \right)  dxdt
    \right|\\
    &\leq& C\|\partial_3^2  v_h\|_{L^2_T(L^4(\mathbb{R}^3))}\|\nabla_h\partial_3 \psi\|_{L^2_T(L^4(\mathbb{R}^3))}
    \|\partial_3^3\psi\|_{L^\infty_T(L^2(\mathbb{R}^3))}\\
    &&+C\|\partial_3^2\psi\|_{L^\infty_T(L^4(\mathbb{R}^3))}
    \|\partial_3^2 v_h\|_{L^2_T(L^4(\mathbb{R}^3))}\|\nabla_h\partial_3^2\psi\|_{L^2_T(L^2(\mathbb{R}^3))}\\
            &&
    +C\|\partial_3\psi\|_{L^\infty_T(L^\infty(\mathbb{R}^3))}
    \|\partial_3^3 v_h\|_{L^2_T(L^2(\mathbb{R}^3))}\|\nabla_h\partial_3^2\psi\|_{L^2_T(L^2(\mathbb{R}^3))}\\
            &\leq& C\|\nabla\psi\|_{L^\infty_T(H^2(\mathbb{R}^3))}\|\nabla_h\nabla\psi\|_{L^2_T(H^1(\mathbb{R}^3))}
            \|\nabla v\|_{L^2_T(H^2(\mathbb{R}^3))}.
    \end{eqnarray*}
The other terms in (\ref{m0-E2.7}) can be treated similarly, and we can
conclude Lemma \ref{m0-L2.4}.
  {\hfill
$\square$\medskip}

We shall now proceed with the proof lemma \ref{m0-L2.5}. The basic
strategy has been described earlier, see for example, (\ref{idea}).
For this purpose, we first prove the following lemma. Here we use the
equation (\ref{m0-E1.5-N3})$_1$  to bounded the term $\int^T_0\int \partial_3\psi \partial_3 v_3(\partial_3^3\psi)^2 dxdt$.
\begin{lem}\label{m0-L2.2-2}
  Under the conditions of Lemma \ref{m0-L2.3}, then there holds
    \begin{eqnarray}
    \left|\int^T_0\int  \partial_3\psi \partial_3 v_3(\partial_3^3\psi)^2 dxdt
    \right|
    &\leq& CB_T^4 (1+B_T), \label{m0-E2.5-0}
    \end{eqnarray}
    where   $C$ is a positive constant  independent of $T$.
\end{lem}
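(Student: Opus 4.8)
The goal of Lemma \ref{m0-L2.2-2} is to estimate $\int_0^T\int \partial_3\psi\,\partial_3 v_3\,(\partial_3^3\psi)^2\,dxdt$. The key idea, as explained around (\ref{idea}) and (\ref{m0-E2.5}), is to use the transport equation (\ref{m0-E1.5-N3})$_1$ to write $\partial_3 v_3 = -\partial_3(\partial_t\psi + v\cdot\nabla\psi)$. The plan is:

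\textbf{Step 1.} Substitute $\partial_3 v_3 = -\partial_3\partial_t\psi - \partial_3(v\cdot\nabla\psi)$ into the integral. This splits it into a time-derivative piece $-\int_0^T\int \partial_3\psi\,\partial_3\partial_t\psi\,(\partial_3^3\psi)^2\,dxdt$ and a convective piece $-\int_0^T\int \partial_3\psi\,\partial_3(v\cdot\nabla\psi)\,(\partial_3^3\psi)^2\,dxdt$.

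\textbf{Step 2.} For the time-derivative piece, integrate by parts in $t$. Since $\partial_3\psi\,\partial_3\partial_t\psi = \frac12\partial_t((\partial_3\psi)^2)$, one obtains a boundary term $\frac12\int (\partial_3\psi)^2(\partial_3^3\psi)^2\,dx\big|_0^T$ plus a term where $\partial_t$ falls on $(\partial_3^3\psi)^2$, namely $-\int_0^T\int (\partial_3\psi)^2\,\partial_3^3\psi\,\partial_3^3\partial_t\psi\,dxdt$. The boundary term is bounded by $\|\partial_3\psi\|_{L^\infty_T(L^\infty)}^2\|\partial_3^3\psi\|_{L^\infty_T(L^2)}^2 \lesssim B_T^4$ using Lemma \ref{m0-L2.2}-type embeddings (and at $t=0$ by $B_0^2 \le B_T^2$). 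In the remaining term, invoke (\ref{m0-E1.5-N3})$_1$ a second time to replace $\partial_3^3\partial_t\psi = -\partial_3^3 v_3 - \partial_3^3(v\cdot\nabla\psi)$; the $\partial_3^3 v_3$ contribution is handled as in Lemma \ref{m0-L2.4} (it is essentially the first term there, with an extra bounded factor $\partial_3\psi$), and the $\partial_3^3(v\cdot\nabla\psi)$ contribution is a higher-order term controlled by H\"older, the interpolation inequalities of Lemma \ref{m0-L2.2}, and the bounds defining $B_T$.

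\textbf{Step 3.} For the convective piece, expand $\partial_3(v\cdot\nabla\psi) = \partial_3 v\cdot\nabla\psi + v\cdot\nabla\partial_3\psi$ and distribute derivatives; use $\mathrm{div}\,v=0$ where convenient to trade $\partial_3 v_3$ for $\mathrm{div}_h v_h$ and move horizontal derivatives onto $\psi$ via integration by parts, so that every $\psi$-factor carries at least one horizontal derivative wherever possible (exploiting that $\nabla_h\nabla\psi\in L^2_T(H^1)$ decays). Then estimate by H\"older in space and time together with (\ref{3D-E2.3-00})--(\ref{m0-E2.2}) and Sobolev embeddings, yielding a bound of the form $C B_T^4(1+B_T)$; the extra power of $B_T$ accounts for the terms where one must pay $\|\nabla\psi\|_{L^\infty_T(L^\infty)}$ or spend an extra derivative of $v$.

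The main obstacle is the bookkeeping in Step 2: after two applications of the transport equation one is essentially reproducing the structure of Lemma \ref{m0-L2.4} but with one additional low-order factor $\partial_3\psi$, and one must make sure the "worst" term $\int_0^T\int(\partial_3\psi)^2\,\partial_3^3\psi\,\partial_3^3 v_3\,dxdt$ genuinely closes — i.e. integrating by parts in $x_3$ (using $\partial_3^3 v_3 = \partial_3^2\mathrm{div}_h v_h$) to put horizontal derivatives on $\psi$, exactly as in the proof of Lemma \ref{m0-L2.4}, so that the dissipative norm $\|\nabla_h\nabla\psi\|_{L^2_T(H^1)}$ appears and absorbs the otherwise-dangerous $(\partial_3^3\psi)$ factor. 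Once this term is shown to be $\lesssim B_T^4(1+B_T)$, summing all contributions gives (\ref{m0-E2.5-0}).
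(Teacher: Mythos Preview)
Your proposal is correct and follows essentially the same route as the paper: substitute $\partial_3 v_3=-\partial_3(\partial_t\psi+v\cdot\nabla\psi)$, integrate the $\partial_t$-piece by parts in time, then replace $\partial_3^3\partial_t\psi$ once more via the transport equation, and estimate all remaining terms by H\"older together with Lemma~\ref{m0-L2.2}.

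One simplification compared with your plan: the term you flag as ``worst'', $\int_0^T\!\int(\partial_3\psi)^2\partial_3^3\psi\,\partial_3^3 v_3\,dxdt$, does \emph{not} require the div-free/Lemma~\ref{m0-L2.4} machinery. Since $\|\partial_3^3 v_3\|_{L^2_T(L^2)}\le\|\nabla v\|_{L^2_T(H^2)}\le B_T$ is already part of the dissipation, the paper simply bounds it by
\[
C\|\partial_3\psi\|_{L^4_T(L^\infty)}^2\|\partial_3^3\psi\|_{L^\infty_T(L^2)}\|\partial_3^3 v_3\|_{L^2_T(L^2)}\le CB_T^4.
\]
The paper also tidies the convective pieces by observing that the two $v\cdot\nabla$ contributions (from $\partial_3(v\cdot\nabla\psi)$ and from the commutator $\partial_3^3(v\cdot\nabla\psi)-v\cdot\nabla\partial_3^3\psi$) combine into $\tfrac12\,v\cdot\nabla\big[(\partial_3\psi)^2(\partial_3^3\psi)^2\big]$, which vanishes after integration since $\mathrm{div}\,v=0$; this replaces your more generic ``distribute derivatives and estimate'' step. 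Your route would also close, just with a bit more bookkeeping.
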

\begin{proof}
Applying (\ref{m0-E2.2}), (\ref{m0-E2.5}), the integration by parts,
H\"{o}lder's inequality and Sobolev embedding Theorem, we get
    \begin{eqnarray*}
      &&\left|\int^T_0\int  \partial_3\psi \partial_3 v_3(\partial_3^3\psi)^2 dxdt
    \right|\\
    &=&\left|\int^T_0\int   \partial_3\psi \partial_3
    (\partial_t\psi+v\cdot\nabla \psi )(\partial_3^3\psi)^2 dxdt
    \right|\\
    &=&\left| \left.\int \frac{1}{2 } (\partial_3\psi)^2 (\partial_3^3\psi)^2 dx\right|^T_0
    +\int^T_0\int \left[ - (\partial_3\psi)^2 \partial_3^3\psi\partial_3^3\partial_t\psi
    + \partial_3\psi \partial_3
    (v\cdot\nabla \psi )(\partial_3^3\psi)^2 \right]dxdt
     \right|\\
        &\leq& C\|\partial_3\psi\|_{L^\infty_T(L^\infty(\mathbb{R}^3))}^2
        \|\partial_3^3\psi\|_{L^\infty_T(L^2(\mathbb{R}^3))}^2 \\
        && +\left|
    \int^T_0\int \left[  (\partial_3\psi)^2 \partial_3^3\psi\partial_3^3(v\cdot\nabla\psi + v_3)
    +\partial_3\psi\partial_3(v\cdot\nabla\psi)   (\partial_3^3\psi)^2 \right]dxdt
     \right|\\
      &\leq& C\|\nabla\psi\|^4_{L^\infty_T(H^2(\mathbb{R}^3))}   +C\|\partial_3\psi\|_{L^4_T(L^\infty(\mathbb{R}^3))}^2
      \|\partial_3^3\psi\|_{L^\infty_T(L^2(\mathbb{R}^3))}
        \|\partial_3^3(v\cdot\nabla\psi)-v\cdot\nabla\partial_3^3\psi\|_{L^2_T(L^2(\mathbb{R}^3))} \\
        && +C\|\partial_3\psi\|_{L^4_T(L^\infty(\mathbb{R}^3))}^2
      \|\partial_3^3\psi\|_{L^\infty_T(L^2(\mathbb{R}^3))}
        \|\partial_3^3v_3        \|_{L^2_T(L^2(\mathbb{R}^3))}\\
        &&+\left|
    \int^T_0\int \left\{\frac{1}{2 }
     \left[(\partial_3\psi)^2  v\cdot\nabla(\partial_3^3\psi)^2
   +
     v\cdot\nabla (\partial_3\psi)^2 (\partial_3^3\psi)^2\right] \right\}dxdt
     \right|\\
     &&+\left|
    \int^T_0\int \left\{ \partial_3\psi\partial_3 v\cdot\nabla \psi  (\partial_3^3\psi)^2 \right\}dxdt
     \right|\\
              &\leq& CB_T^4(1+B_T).
    \end{eqnarray*}
In deriving the last inequality above, we have used the following
calculations:
    \begin{eqnarray*}
      &&\|\partial_3^3(v\cdot\nabla\psi)-v\cdot\nabla\partial_3^3\psi\|_{L^2_T(L^2(\mathbb{R}^3))}\\
        &\leq&C  \|\nabla^3 v\|_{L^2_T(L^2(\mathbb{R}^3))}\|\nabla \psi\|_{L^\infty_T(L^\infty(\mathbb{R}^3))}
        +C\|\nabla^2 v\|_{L^2_T(L^4(\mathbb{R}^3))}\|\nabla^2 \psi\|_{L^\infty_T(L^4(\mathbb{R}^3))}
       \\
        && +C\|\nabla v\|_{L^2_T(L^\infty(\mathbb{R}^3))}\|\nabla^3 \psi\|_{L^\infty_T(L^2(\mathbb{R}^3))}\\
        &\leq&C\|\nabla v\|_{L^2_T(H^2(\mathbb{R}^3))}\|\nabla  \psi\|_{L^\infty_T(H^2(\mathbb{R}^3))},
    \end{eqnarray*}
and also the estimation:
    \begin{eqnarray*}
      &&\left|
    \int^T_0\int \left\{ \partial_3\psi\partial_3 v\cdot\nabla \psi  (\partial_3^3\psi)^2 \right\}dxdt
     \right|     \\
     &\leq& \|\nabla\psi\|_{L^4_T(L^\infty(\mathbb{R}^3))}^2\|\nabla v\|_{L^2_T(L^\infty(\mathbb{R}^3))}\|\nabla^3\psi\|_{L^\infty_T(L^2(\mathbb{R}^3))}^2.
    \end{eqnarray*}
\end{proof}

\noindent\textbf{Proof of Lemma \ref{m0-L2.5}.}
    We use Lemma \ref{m0-L2.2}, (\ref{m0-E2.7}), (\ref{m0-E2.5}), (\ref{m0-E2.5-0}),
 the integration by parts,  H\"{o}lder's inequality and Sobolev embedding Theorem
to do following derivations:
    \begin{eqnarray*}
      &&\left|\int^T_0\int  \partial_3 v_3(\partial_3^3\psi)^2 dxdt
    \right|\\
        &=&\left|\int^T_0\int  \partial_3 (\partial_t
        \psi+v\cdot\nabla\psi )(\partial_3^3\psi)^2 dxdt
    \right|\\
        &=&\left|\left.\int  \partial_3
        \psi (\partial_3^3\psi)^2 dx\right|^T_0
        +\int^T_0\int \left\{  -2\partial_3\psi\partial_3^3\psi\partial_3^3\partial_t\psi
         + \partial_3 ( v\cdot\nabla\psi )(\partial_3^3\psi)^2\right\}dxdt
    \right|\\
        &\leq& C\|\partial_3\psi\|_{L^\infty_T(L^\infty(\mathbb{R}^3))}\|\partial_3^3\psi\|_{L^\infty_T(L^2(\mathbb{R}^3))}^2 \\
            &&  +\left|
        \int^T_0\int \left\{  2\partial_3\psi\partial_3^3\psi\partial_3^3(v\cdot\nabla\psi+
        v_3) + \partial_3 ( v\cdot\nabla\psi )(\partial_3^3\psi)^2\right\}dxdt
    \right|\\
        &\leq& C\|\nabla\psi\|_{L^\infty_T(H^2(\mathbb{R}^3))}^3
        +C\|\nabla \psi\|_{L^\infty_T(H^2(\mathbb{R}^3))}\|\nabla\nabla_h\psi\|_{L^2_T(H^1(\mathbb{R}^3))}\|\nabla v\|_{L^2_T(H^2(\mathbb{R}^3))}\\
           &&  +\left|
        \int^T_0\int   \left\{ 2\partial_3\psi\partial_3^3\psi[\partial_3^3(v\cdot\nabla\psi)-v\cdot\nabla\partial_3^3\psi]
         +
           \partial_3\psi  v \cdot\nabla(\partial_3^3\psi)^2    \right.\right.\\
        &&\left.\left. + v\cdot\nabla \partial_3\psi(\partial_3^3\psi)^2 + \partial_3   v\cdot\nabla\psi (\partial_3^3\psi)^2\right\}dxdt
    \right|\\
    &\leq&
    C B_T^3(1+B_T^2).
    \end{eqnarray*}
    In the last step above, we have also applied the following estimate,
    \begin{eqnarray*}
    &&   \left|
        \int^T_0\int   \left\{ 2\partial_3\psi\partial_3^3\psi[\partial_3^3(v\cdot\nabla\psi)-v\cdot\nabla\partial_3^3\psi]
          + \partial_3   v\cdot\nabla\psi (\partial_3^3\psi)^2\right\}dxdt
    \right|\\
        &\leq& \left|
        \int^T_0\int    2\partial_3\psi\partial_3^3\psi(\partial_3^3v\cdot\nabla\psi+3\partial_3^2v\cdot\nabla\partial_3\psi
                 )  dxdt
    \right|+\left|
        \int^T_0\int   6\partial_3\psi\partial_3^3\psi
        \partial_3 v_h\cdot\nabla_h\partial_3^2\psi   dxdt
    \right|\\
    &&
         +\left|
        \int^T_0\int  7\partial_3\psi
        \partial_3 v_3 (\partial_3^3\psi)^2   dxdt
    \right|  +\left|
        \int^T_0\int
        \partial_3 v_h\cdot\nabla_h \psi (\partial_3^3\psi)^2   dxdt
    \right|\\
        &\leq&C\|\nabla\psi\|_{L^4_T(L^{ \infty}(\mathbb{R}^3))}^2\|\partial_3^3\psi\|_{L^\infty_T(L^2(\mathbb{R}^3))}\|\nabla^3 v\|_{L^2_T(L^2(\mathbb{R}^3))}
        \\
        &&+C\|\nabla\psi\|_{L^4_T(L^{ \infty}(\mathbb{R}^3))}\|\partial_3^3\psi\|_{L^\infty_T(L^2(\mathbb{R}^3))}\|\nabla^2 v\|_{L^2_T(L^4(\mathbb{R}^3))}
        \|\nabla^2\psi\|_{L^4_T(L^4(\mathbb{R}^3))}
      \\
         &&+C\|\partial_3\psi\|_{L^\infty_T(L^\infty(\mathbb{R}^3))}
            \|\partial_3^3\psi\|_{L^\infty_T(L^2(\mathbb{R}^3))}\|\partial_3 v\|_{L^2_T(L^\infty(\mathbb{R}^3))}
            \|\nabla_h\partial_3^2\psi\|_{L^2_T(L^2(\mathbb{R}^3))}\\
     &&+    CB_T^4(1+B_T) +C\|\partial_3 v\|_{L^2_T(L^\infty(\mathbb{R}^3))}
     \|\nabla_h\psi\|_{L^2_T(L^\infty(\mathbb{R}^3))}\|\partial_3^3\psi\|_{L^\infty_T(L^2(\mathbb{R}^3))}^2  \\
      &\leq&  C B_T^4(1+B_T).
    \end{eqnarray*}

The remaining parts have already shown to have the desired estimates. Thus
we complete the proof of Lemma.
{\hfill
$\square$\medskip}

\section*{Acknowledgements}
The research of F.H.Lin is partial supported by the NSF grants, DMS 1065964
and DMS 1159313. The research of T. Zhang is partially supported by NSF of
China under Grants 11271322,  11331005 and 11271017, National Program for
Special Support of Top-Notch Young Professionals, Program for New Century
Excellent Talents in University NCET-11-0462, the Fundamental Research
Funds for the Central Universities (2012QNA3001). Part of the work was
done while the second author was visiting the Courant Institute
Mathematical Sciences. T.Z. wants to thank the Courant Institute for the
warm hospitality.


\begin{thebibliography}{99}

\bibitem{Bardos}
C. Bardos,  C. Sulem, P.-L. Sulem,
Longtime dynamics of a conductive fluid in the presence of a strong magnetic field.
Trans. Amer. Math. Soc.  305  (1988),  no. 1, 175--191




 \bibitem{Cao}
C.S. Cao, J.H.  Wu,
Global regularity for the 2D MHD equations with mixed partial dissipation and magnetic diffusion. Adv. Math. 226 (2011), no. 2, 1803--1822.


\bibitem{Constantin}
P. Constantin,
Complex fluids and Lagrangian particles. Topics in mathematical fluid
mechanics, 1-21, Lecture Notes in Math., 2073, Springer, Heidelberg, 2013.

\bibitem{Constantin2}
P. Constantin,
Remarks on complex fluid models. Mathematical aspects of fluid mechanics.
70-87, London Math. Soc. Lecture Note Ser., 402, Cambridge Univ. Press,
Cambridge, 2012

 \bibitem{Doi}
M. Doi and S. F. Edwards,
 The Theory of Polymer Dynamics. Oxford Science Publication, 1986.


\bibitem{Duvaut}
G. Duvaut, J.-L. Lions, In\'{e}quations en thermo\'{e}lasticit\'{e} et magn\'{e}tohydrodynamique, Arch. Rational Mech. Anal.  46
(1972) 241--279.

\bibitem{Frisch}
U. Frisch, A. Pouquet, P. L. Sulem , and M. Meneguzzi,
The dynamics of two-dimensional ideal MHD,  J. M\'{e}c. Th\'{e}or. Appl. 2  (1983), Special Suppl. 191--216.



\bibitem{Saut}
C. Guillop\'{e} and J. C. Saut, Existence results for the flow of viscoelastic fluids with a differential
constitutive law, Nonlinear Anal., 15(1990), no. 9, 849--869.


\bibitem{Kraichnan}
R. H. Kraichnan,
Lagrangian-history closure approximation for turbulence.
Phys. Fluids  8  (1965) 575--598.

 \bibitem{Peskin}
P. R. Kramer, C. S. Peskin, and P. J. Atzberger,
 On the foundations of the stochastic immersed boundary method. Comput. Methods Appl. Mech. Engrg., 197(25-28): 2232--2249,
2008.

\bibitem{Larson}
R. G. Larson. The Structure and Rheology of Complex Fluids. Oxford
 University Press, New York, 1999.


\bibitem{Lin12-3}
F.H. Lin,
 Some analytical issues for elastic complex fluids,
 Comm. Pure Appl. Math.
  65 (2012), no. 7,   893--919.


 \bibitem{Lin12-2}
  F.H. Lin, L. Xu and P. Zhang,
Global small solutions to 2-D incompressible MHD
system, arXiv:1302.5877.







 \bibitem{Lin12}
 F.H. Lin and P. Zhang,
 Global small solutions to an MHD-type system: the three-dimensional case, Comm. Pure Appl. Math., 67(2014), no. 4, 531--580.




\bibitem{Majda84}
A. Majda, Compressible fluid flow and systems of conservation laws in several space variables.
  Springer-Verlag, New York, 1984.

\bibitem{Nirenberg}
L. Nirenberg, An extended interpolation inequality, Ann. Scuola Norm.
Sup. Pisa (3) 20 (1966) 733--737.

\bibitem{Renardy}
M. Renardy, W. J. Hrusa, and J. A. Nohel,
 Mathematical Problems in Viscoelasticity, volume 35
of Pitman Monographs and Surveys in Pure and Applied Mathematics. Longman Scientific \&
Technical, Harlow, 1987






\bibitem{Sermange}
M. Sermange, R. Temam, Some mathematical questions related to the MHD
equations, Comm. Pure Appl. Math. 36(1983), no. 5,  635--664



\end{thebibliography}
\end{document}